%% file: main_arxiv.tex
\documentclass[hidelinks,onefignum,onetabnum]{siamart251216}

\input{shared_arxiv}

\ifpdf
\hypersetup{
  pdftitle={Hierarchical low-rank compression exploiting symmetric boundary integral equations in transmission scattering},
  pdfauthor={Y. Matsumoto, T. Maruyama, Q. Ma and R. Yokota}
}
\fi

\usepackage{physics}
\usepackage{booktabs}
\usepackage{siunitx}
\usepackage{multirow}
\usepackage{mathtools}
\DeclarePairedDelimiter{\bilinear}{\langle}{\rangle}
\mathtoolsset{showonlyrefs}
\usepackage{bm}
\usepackage{capt-of}

\newtheorem{example}{Example}%

\begin{document}

\maketitle

% REQUIRED
\begin{abstract}
  We propose an approach using matrix symmetry to accelerate hierarchical low-rank compression for wave transmission scattering.
  For Helmholtz transmission problems, we introduce a symmetric Müller-like boundary integral equation formulation.
  Although not of the second kind, this formulation creates no difficulties for direct solvers.
  A one-point corrected quadrature in a Nyström discretization preserves the complex symmetry of the discretized system.
  This formulation is integrated into a variant of a hierarchical semiseparable direct solver.
  Within this framework, the computational workload of the low-rank approximation in the proxy method is halved by exploiting the symmetric relationship between the left and right coefficients.
  Furthermore, we extend this symmetry-exploiting approach to elastic transmission problems by using a symmetric Galerkin boundary element method.
  Numerical experiments on a multi-node computing cluster demonstrate that the symmetric Müller-like formulation for Helmholtz scattering achieves on average a 1.41-fold speedup compared to the non-symmetric baseline while maintaining third-order convergence.
  Furthermore, the symmetry-aware solver for elastic scattering is more than 1.5 times faster.
\end{abstract}

% REQUIRED
\begin{keywords}
Transmission problems, proxy method, hierarchical low-rank matrix, boundary element methods, Helmholtz scattering, elastic wave scattering
\end{keywords}

% REQUIRED
\begin{MSCcodes}
65R20, 65F05, 65F55
\end{MSCcodes}

\section{Introduction}
Boundary integral equations are powerful tools for analyzing wave transmission scattering in physics and engineering, with applications including acoustics \cite{ganesh2024fast}, electromagnetics \cite{nick2024numerical}, and elastodynamics \cite{dominguez2022boundary}.
Since the coefficient matrix of the discretized linear equations stemming from boundary integral equations is dense, the combinations of the fast multipole method (and its variants) and Krylov subspace methods have been actively researched for solving large-scale problems \cite{Ying2009, 2019Abduljabbar, liu2024massive}.
In this context, numerous boundary integral equations have been investigated to not only guarantee unique solvability but also accelerate the convergence of iterative solvers.
As an early example, the (standard) M\"uller method formulates transmission problems as a Fredholm boundary integral equation of the second kind \cite{muller1969foundations}.
The integral operator associated with a second-kind equation can be expressed as the sum of a constant multiple of the identity operator and a compact operator; therefore, it possesses spectral properties desirable for iterative solvers \cite{kress2014linear}.
To obtain linear equations of the second kind for the Helmholtz transmission problem,
 the single boundary integral equation approach utilizing a product of layer potentials via indirect potentials was proposed \cite{kleinman1988single}, and
alternative regularized formulations based on operator products have also been proposed \cite{boubendir2015integral}.
Several complementary techniques for enhancing the spectral properties of first-kind formulations through preconditioning have also been developed \cite{Antoine01102008, NIINO201266, VANTWOUT2022111229}.

However, there is a growing trend toward fast direct solvers \cite{martinsson2019book}.
Since such solvers are applicable even to relatively ill-conditioned problems \cite{Greengard2009fast},
 employing boundary integral equations of the second kind is not a strict requirement.
 This difference between direct and iterative solvers motivates the formulation of alternative boundary integral equations tailored specifically for direct solvers.
 Within numerical solutions of integral equations, evaluating matrix entries frequently dominates the computational time compared to algebraic operations such as matrix multiplication or factorization.
 Consequently, utilizing efficient low-rank approximation \cite{bebendorf2000approximation} techniques offers a promising solution.
Clearly, if a coefficient matrix possesses symmetry, the low-rank approximation workload can be cut in half.
 Note that for frequency domain wave scattering, the resulting coefficient matrix is complex-valued; thus, we consider complex symmetry instead of the Hermitian property.
 Although various symmetric formulations have been developed prior to the present work \cite{hsiao2011system, laliena2009symmetric}, including but not limited to the symmetric Galerkin boundary element method \cite{bonnet1998symmetric, MARUYAMA202511-251219}, several challenges have hindered their integration for direct solvers.
 These include the appearance of fictitious frequencies \cite{CHEN1998529} on the real axis,
 increased degrees of freedom, and the explicit use of hypersingular kernels.

We propose a symmetric M\"uller-like boundary integral equation.
 Unlike the standard M\"uller method, this formulation is not of the Fredholm second kind; consequently, it converges slowly when solved using Krylov subspace methods, making it less attractive for iterative solvers, but it is an appealing candidate for direct solvers.
This proposed M\"uller-like formulation involves only weakly singular kernels, maintains the same number of unknowns as the conventional formulation, and is derived as a modification of the coupling parameter based on Green's representation theorem; consequently, fictitious eigenvalues do not appear on the real axis.
Furthermore, since the physical unknowns on the boundary are obtained directly as solutions of the boundary integral equation, this approach offers simplicity in engineering applications.
We note that the symmetric M\"uller-like boundary integral equation can be obtained by a constant scaling and swapping the columns of the operator in the formulation introduced in \cite{misawa2012jascome}.

Another contribution of this paper is the discretization and integration into a solver. Specifically, we discretize the symmetric M\"uller-like equation while preserving its complex symmetry.
 We then integrate it into a direct solver based on a variant of the hierarchical semiseparable (HSS) representation \cite{chandrasekaran2005Calcolo}.
Although the HSS representation achieves exceptionally high parallel efficiency \cite{rouet2016distributed},
 the low-rank approximation of all off-diagonal blocks limits its applicability to higher-order discretizations \cite{matsumoto2026proxy}.
Since the symmetric M\"uller-like equation involves at most weakly singular kernels, high-order convergence can be expected even with a one-point corrected quadrature in a Nystr\"om discretization; therefore, there is no obstacle to its integration into the HSS representation.
We discuss a method for reducing computational workload by exploiting symmetry within the proxy method, a typical low-rank approximation approach for boundary integral equations \cite{MARTINSSON20051, martinsson2007fast}.
This acceleration method using symmetry for low-rank approximation serves as a general framework that can be extended directly from Helmholtz scattering to elastodynamics.

Although recent interest has shifted toward $\mathcal{H}^2$-matrix-based direct solvers \cite{minden2017recursive, ma2024inherently, boukaram2026linear}, research into HSS-based direct solvers remains highly valuable due to persistent challenges in parallel efficiency, accuracy, and implementation complexity.
 Furthermore, the potential extension of the HSS representation to advanced solvers, including the hierarchical interpolative factorization, among others \cite{corona2015n, k_l_ho2016hierarchical, kandappan2023hodlr2d}, that incorporate an additional re-compression phase or a geometrically adjusted admissibility condition offers an appealing avenue for future development.

The remainder of this manuscript is organized as follows.
 Section \ref{sec:helmholtz_scattering} defines the Helmholtz transmission problem.
 Section \ref{sec:symm} discusses the required discretization based on the symmetric properties of the fundamental solution and subsequently introduces the proposed M\"uller-like boundary integral equation.
 Section \ref{sec:fds} describes the low-rank approximation using the proxy method under symmetry, as well as the corresponding HSS-variant fast direct solver.
 In Section \ref{sec:extension}, the method developed for the Helmholtz transmission problem is extended to the elastic transmission problem.
 Section \ref{sec:numerical_demo} presents comprehensive numerical results to validate the accuracy and efficiency of the proposed method.
 Finally, Section \ref{sec:conclusions} concludes the paper.

\section{Helmholtz transmission problems} \label{sec:helmholtz_scattering}
We briefly describe the Helmholtz transmission problem.
The setting is illustrated in Fig. \ref{fig:domain}.
Let $\Omega_1$ be an open and bounded subset of $\mathbb R^d$ for $d = 2, 3$.
We assume that the exterior $\Omega_0 = \mathbb R^d \setminus {\overline \Omega_1}$ is connected,
whereas $\Omega_1$ is allowed to be disconnected.
Let $\Gamma=\partial \Omega_1$ be the interface between the two regions.
We assume that $\Gamma$ is a Jordan closed curve of at least class $C^2$.
Let $\varepsilon_i > 0$, $\omega > 0$, and $k_i = \omega \sqrt{\varepsilon_i} > 0$ be constants
representing the material constant, angular frequency, and wavenumber, respectively, in $\Omega_{i}$ ($i=0,1$).
Further, let $u^{\mathrm{in}}$ be the incident wave field
that solves the Helmholtz equation with wavenumber $k_0$ in $\mathbb{R}^d$.
We consider the following transmission problem:
find the scattered fields $u_0^{\mathrm{sc}}$ in $\Omega_0$ and $u_1^{\mathrm{sc}}$ in $\Omega_1$ such that
\begin{align}
  %(P) \quad
  \begin{dcases}
    \Delta u_0^{\mathrm{sc}} + k_0^2 u_0^{\mathrm{sc}} = 0, & x \in \Omega_0,
    \\
    \Delta u_1^{\mathrm{sc}} + k_1^2 u_1^{\mathrm{sc}} = 0, & x \in \Omega_1,
    \\
    \lim_{h \to 0_+}{\qty(u_0^{\mathrm{sc}} + u^{\mathrm{in}})(x + h\nu(x))} = \lim_{h \to 0_+}{ u_1^{\mathrm{sc}}(x - h \nu(x)) } \,\, \qty(= u(x)),
    & x \in \Gamma,
    \\
    \frac{1}{\varepsilon_0} \nu(x) \cdot \qty{ \lim_{h \to 0_+} \nabla \qty(u_0^{\mathrm{sc}} + u_0^{\mathrm{in}}) (x + h\nu(x))) } \\
    \qquad \qquad = \frac{1}{\varepsilon_1} \nu(x) \cdot \qty{ \lim_{h \to 0_+} \nabla u_1^{\mathrm{sc}}(x - h \nu(x)) } \,\, \qty(= q(x)),
    & x \in \Gamma,
    \\
      \lim_{|x| \to \infty}{|x|}^{\frac{d-1}{2}}\left( \frac{\partial u_0^{\mathrm{sc}}(x)}{\partial |x|} - ik_0 u_0^{\mathrm{sc}}(x) \right) = 0,
  \end{dcases}
  \label{eq:problem}
\end{align}
where $i$ is the imaginary unit and the unit normal vector $\nu$ on $\Gamma$ is directed into $\Omega_0$.
We define the two functions $u, q$ on the boundary $\Gamma$ using
the boundary conditions in \eqref{eq:problem}.
\begin{figure}[tb]
  \centering
  \includegraphics[width=0.3\linewidth]{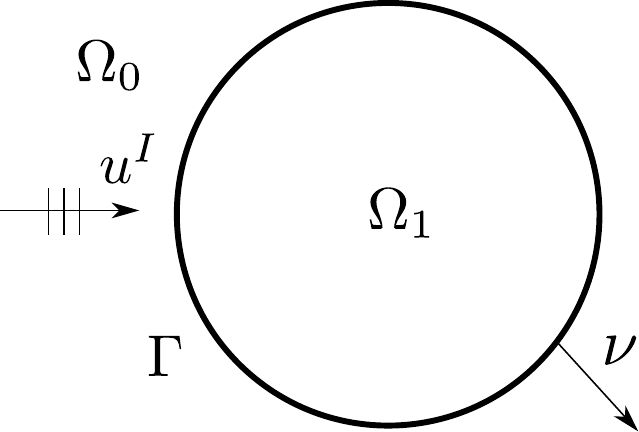}
  \caption{Diagram of Helmholtz transmission problem}
  \label{fig:domain}
\end{figure}
It is known that 
transmission problem \eqref{eq:problem} is uniquely solvable \cite{kress1977transmission}.

\section{Symmetric formulations of boundary integral equations} \label{sec:symm}
The procedure for symmetrizing the coefficient matrix of a discretized linear equation in the Nystr\"om method is based on the symmetry of the fundamental solution of the Helmholtz equation.
Although this symmetry is also used in the context of the symmetric Galerkin boundary element method \cite{sutradhar2008symmetric},
we will show that the same symmetric property works well in the Nystr\"om method if we use a quadrature formula that satisfies an appropriate condition. 
\subsection{Boundary integral operators and their Nystr\"om discretization} \label{sec:nystrom}
We first define the following boundary integral operators:
\begin{align}
  (S_k\varphi)(x) &= \int_{\Gamma} G_k(x,y)\varphi(y) \dd s(y), \label{eq:slp}
  \\
  (D_k\varphi)(x) &= \int_{\Gamma} \frac{\partial G_k}{\partial \nu(y)}(x,y)\varphi(y) \dd s(y),
  \\
  (D_k^* \varphi)(x) &= \int_{\Gamma} \frac{\partial G_k}{\partial \nu(x)}(x,y)\varphi(y) \dd s(y),
  \\
  (N_k\varphi)(x) &= \text{FP} \int_{\Gamma} \frac{\partial^2 G_k}{\partial \nu(x) \partial \nu(y)}(x,y)\varphi(y) \dd s(y), \label{eq:hyper_singular}
\end{align}
for all $x\in\Gamma$ and 
function $\varphi : \Gamma \to \mathbb{C}$, where
$\frac{\partial f}{\partial\nu(z)}$ is the normal derivative with respect to $\nu(z)$,
defined as
\begin{equation}
\frac{\partial f}{\partial\nu(z)} = \lim_{h\to 0_+} \nu(z)\cdot \nabla f(z + h \nu(z)), \quad z \in \Gamma.
\end{equation}
Here, $G_k(x, y)$ is the fundamental solution of the Helmholtz equation, given by
\begin{align}
  G_k(x, y) =
  \begin{dcases}
    \frac{i}{4} H_{0}^{(1)} (k |x-y|) & (d = 2), \\
    \frac{e^{i k |x - y|}}{4 \pi |x-y|} & (d = 3),
  \end{dcases}
\end{align}
for $x, y \in \mathbb{R}^d$ with $x \neq y$, where $H_{0}^{(1)}(z)$ is the zeroth-order Hankel function of the first kind.
In \eqref{eq:hyper_singular}, ``FP'' means the finite part of a divergent integral.

It is well known that the fundamental solution $G_k(x, y)$ has the following symmetry \cite{sutradhar2008symmetric}:
  \begin{align}
    G_k(x, y) = G_k(y, x), \quad
    \pdv{G_k(x, y)}{\nu(y)} = \pdv{G_k(y, x)}{\nu(y)}, \quad
    \pdv{G_k(x, y)}{\nu(x)}{\nu(y)} = \pdv{G_k(y, x)}{\nu(y)}{\nu(x)},
  \end{align}
for all $x, y$ $(x \neq y)$ on the boundary $\Gamma$.
However, even if the kernel in the integral operator is symmetric,
that does not guarantee that the matrix obtained by discretizing the operator is symmetric.
As a two-dimensional example, consider the single-layer potential \eqref{eq:slp} defined via a parametrization of $\Gamma$:
\begin{align}
  S_k \varphi(x) = \int_\Gamma G_k(x, y) \, \varphi(y) \, \dd s(y) 
  = \int_{-\pi}^{\pi} G_k(x, y(\theta)) \, \varphi(y(\theta)) \, |y^{\prime}(\theta)| \, \dd \theta,
\end{align}
where $|y^{\prime}(\theta)|$ is the Jacobian of the parametrization,
 and this resulting parameterized kernel is $2\pi$-periodic.
Discretizing this by $N$-point trapezoidal quadrature with some local corrected kernel $\tilde{G}$ as
\begin{align}
  S_k \varphi(x_i) \approx \sum_{j = 1}^{N} \tilde{G}_k(x_i, y_j) \, \varphi(y_j) \, |y^{\prime}_j| \frac{2\pi}{N},
\end{align}
where for $j = 1, 2, \ldots, N$,
$\theta_j$ are equispaced points in the parameter space
 and $y_j = y(\theta_j)$ are the corresponding quadrature points in the original coordinates.
Similarly, $x_j = x(\theta_j)$ is the observation point for $i = 1, 2, \ldots, N$.
We define the discretized single-layer potential operator $\bm{S}_k^\prime$ whose $i, j$ component is expressed as
\begin{equation}
  [\bm{S}_k^\prime ]_{ij} = \tilde{G}_k(x_i, y_j) \, w_j ,
\end{equation}
where we refer to $w_j = |y^{\prime}_j| \frac{2\pi}{N} \, (= |x^{\prime}_j| \frac{2\pi}{N})$ as the weight of the quadrature rule.
We can interpret matrix $\bm{S}_k^\prime$ as being scaled by $w_j$ for the $j$-th column.
Then, we represent a row-scaled (discretized) single-layer potential operator $\bm{S}_k$ by
\begin{equation}
  [\bm{S}_k ]_{ij} = w_i \tilde{G}_k(x_i, y_j) \, w_j. \label{eq:scaled_slp}
\end{equation}
If a local corrected kernel $\tilde{G}_k$ is symmetric,
then we can conclude that matrix $\bm{S}_k$ is symmetric (but is not Hermitian).
However, in general, the local corrected part of $\tilde{G}$ (the blue band in Figure \ref{fig:local_band}) is not symmetric.
Therefore, we restrict ourselves to the quadrature methods that preserve symmetry.
In what follows, we only discuss two-dimensional problems for simplicity.
Although the proposed framework is applicable to three-dimensional problems
via the one-point corrected quadrature for a torus geometry \cite{wu2021corrected},
the construction of a quadrature method preserving symmetry for any geometry is not easy.
\begin{figure}[tb]
  \centering
  \hfill
  \includegraphics[width=0.20\linewidth]{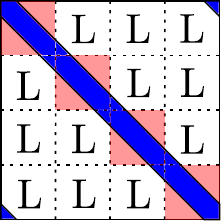}
  \hfill
  \includegraphics[width=0.20\linewidth]{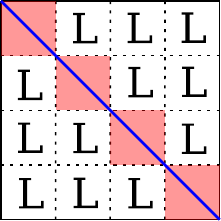}
  \hfill
  \caption{
    Illustrations of matrices with a typical weakly admissible condition for one-dimensional data,
    where all off-diagonal blocks are approximated as low-rank.
    The left and right illustrations correspond to a high-order and the lowest-order discretizations, respectively.
    The local corrected part is indicated by the blue band.
    The red blocks represent the dense parts, whereas blocks labeled ``L'' represent the low-rank approximable blocks.
  }
  \label{fig:local_band}
\end{figure}

In the method of the present study, we only correct the diagonal term of $\bm{S}_k$ using
the one-point zeta-corrected quadrature \cite{wu2021zeta}.
This one-point correction is consistent with the fact that the hierarchical semiseparable representation (used in Section \ref{sec:fds}) may not be fully suitable for high-order discretizations \cite{matsumoto2026proxy}.
Herein, we refer to the one-point zeta-corrected quadrature as the lowest-order zeta-corrected quadrature that uses the correction term only on the observation node.
We can define the one-point corrected $\tilde{G}_k^S$ (for the single-layer operator) by
\begin{equation}
  \tilde{G}_k^{S}(x_i, y_j) =
  \begin{dcases}
    \text{Corrected term for $S_k$} & (i = j), \\
    G_k(x_i, y_j) & (i \neq j),
  \end{dcases}
\end{equation}
so that $\tilde{G}_k^{S} (x_i, y_j)$ ($i, j = 1, 2, \ldots, N$) satisfies
\begin{equation}
  \qty| \int_\Gamma G_k(x_i, y) \varphi (y) \dd s(y) - \sum_{j = 1}^{N} \tilde{G}_k^{S}(x_i, y_j) \varphi(y_j) w_j | =  O(h^2), \label{eq:zeta_s}
\end{equation}
where $h$ is the interval of the equispaced quadrature points in the parameter space
and \eqref{eq:zeta_s} is point-wise convergent as $N \to \infty$ \cite{wu2021zeta}.
Here, ``point-wise'' means the dependency on $\varphi$.
Using this corrected kernel $\tilde{G}_k^{S}$,
 the row-scaled discretized single-layer potential operator $\bm{S}_k$ is redefined as
\begin{equation}
  [\bm{S}_k ]_{ij} = w_i \tilde{G}_k^{S_k}(x_i, y_j) w_j. \label{eq:scaled_slp}
\end{equation}
Relation \eqref{eq:zeta_s} can be rewritten using a discretized vector $\bm{\varphi}$,
 whose $i$-th component is expressed as $\qty[\bm{\varphi}]_i = \varphi(x_i)$,
 as
\begin{equation}
  \qty| w_i S_k \varphi (x_i) - \qty[\bm{S}_k \bm{\varphi}]_i | =  O(h^2), \quad i = 1, 2, \ldots, N.
\end{equation}
It is also point-wise convergent since $w_i$ is bounded from the $C^2$ smoothness of $\Gamma$.
Similarly to $\bm{S}_k$, we define row-scaled discretized layer potential operators $\bm{D}_k$, $\bm{D}_k^*$, and $\bm{N}_k$ corresponding to $D_k$, $D_k^*$, and $N_k$ using the zeta-corrected quadrature by
\begin{align}
  [\bm{D}_k ]_{ij} = w_i \tilde{G}_k^{D_k}(x_i, y_j) w_j, \label{eq:scaled_dlp} \\
  [\bm{D}_k^* ]_{ij} = w_i \tilde{G}_k^{D_k^*}(x_i, y_j) w_j, \label{eq:scaled_d_slp} \\
  [\bm{N}_k ]_{ij} = w_i \tilde{G}_k^{N_k}(x_i, y_j) w_j, \label{eq:scaled_d_dlp}
\end{align}
where each corrected kernel is given by
\begin{align}
  \tilde{G}_k^{{D}}(x_i, y_j) &=
  \begin{dcases}
    \text{Corrected term for $D_k$} & (i = j), \\
    \pdv{G_k(x_i, y_j)}{\nu (y_j)} & (i \neq j),
  \end{dcases}
  \\
  \tilde{G}_k^{{D}^*}(x_i, y_j) &=
  \begin{dcases}
    \text{Corrected term for ${D}_k^*$} & (i = j), \\
    \pdv{G_k(x_i, y_j)}{\nu (x_j)} & (i \neq j),
  \end{dcases}
  \\
  \tilde{G}_k^{{N}}(x_i, y_j) &=
  \begin{dcases}
    \text{Corrected term for ${N_k}$} & (i = j), \\
    \pdv{G_k(x_i, y_j)}{\nu (x_j)}{\nu (y_j)} & (i \neq j),
  \end{dcases}
\end{align}
for $i, j = 1, 2, \ldots N$.
These discretized potential operators $\bm{D}_k$, $\bm{D}_k^*$, and $\bm{N}_k$ satisfy
\begin{align}
  \qty| w_i D_k \varphi (x_i) - [\bm{D}_k \bm{\varphi}]_i | &= O(h^2), \\
  \qty| w_i D_k^* \varphi (x_i) - [\bm{D}_k^* \bm{\varphi}]_i | &= O(h^2), \\
  \qty| w_i N_k \varphi (x_i) - [\bm{N}_k \bm{\varphi}]_i | &= O(h). \label{eq:estimate_N}
\end{align}
In this row-scaled formulation, we have the symmetric relations
\begin{align}
  \bm{S}_k = \bm{S}_k^\top, \quad \bm{D}_k = {\bm{D}_k^*}^\top, \quad \bm{N}_k = \bm{N}_k^\top. \label{eq:syms}
\end{align}
For a detailed discussion of the correction terms and their error estimates,
we refer the reader to \cite{wu2021zeta, wu2023unified}.

\begin{example}[Numerical symmetry] \label{ex:sym}
  \begin{figure}[tbp]
    \centering
    \includegraphics[width = 0.23\linewidth]{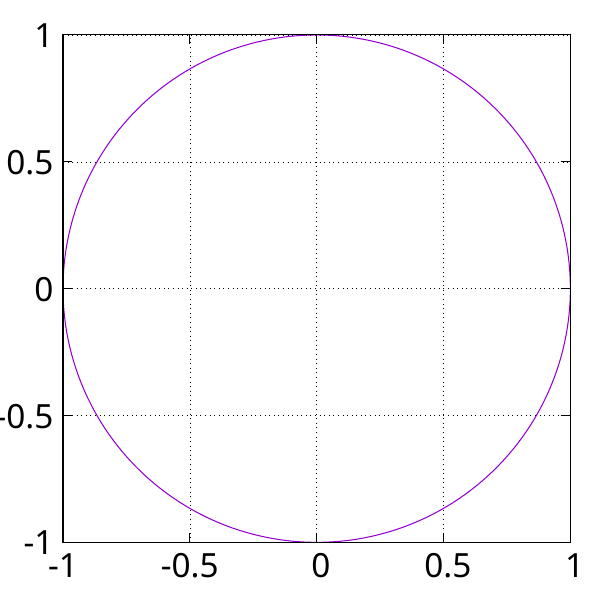}
    \hfill
    \includegraphics[width = 0.23\linewidth]{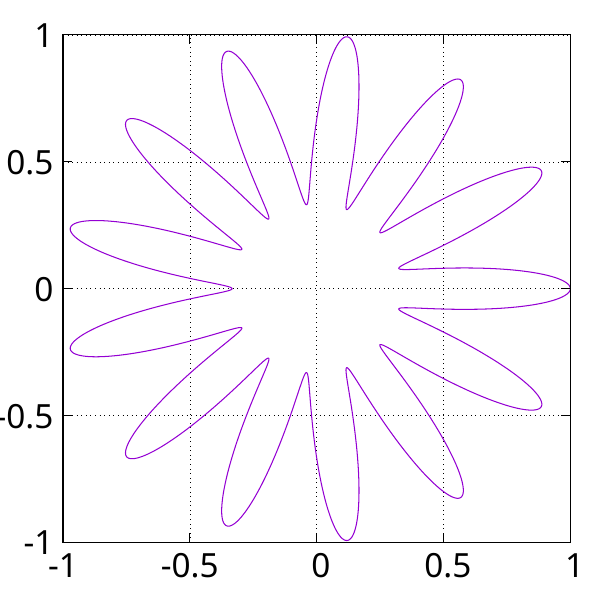}
    \hfill
    \includegraphics[width = 0.23\linewidth]{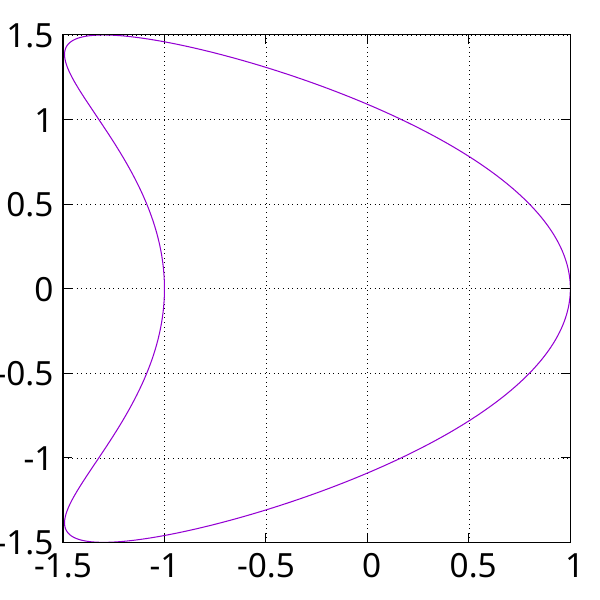}
    \hfill
    \includegraphics[width = 0.23\linewidth]{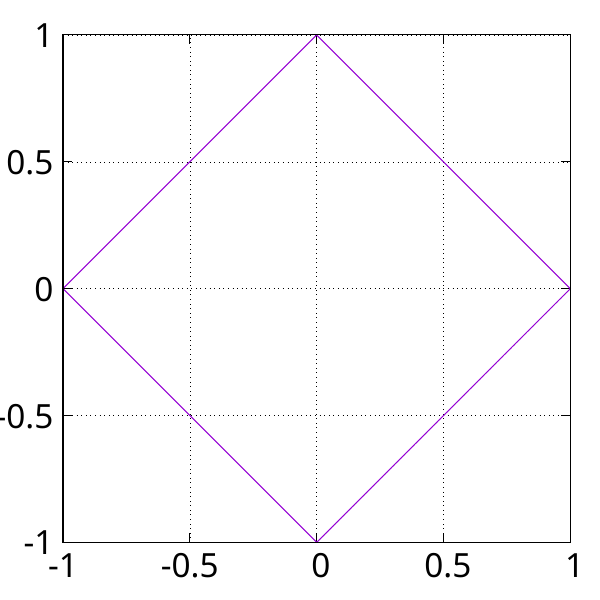}
    \caption{Geometries used in numerical examples. From left to right, these are the unit circle, smooth star, kite, and square.}
    \label{fig:geometries}
  \end{figure}
  We demonstrate a simple numerical verification of the symmetric relations \eqref{eq:syms}.
  Let $\bm{Z}$ and $\bm{Z}^\prime$ be a row-scaled discretized potential operator and its non-scaled counterpart,
  where $\bm{Z}$ stands for $\bm{S}_k$, $\bm{D}_k$, $\bm{D}_k^*$, or $\bm{N}_k$.
  We use the unit circle and smooth star illustrated in Figure \ref{fig:geometries} as the geometries.
  The number of quadrature points is 3000.
  The wavenumber $k$ and $\tilde{k}$ are 3 and 6, respectively.
  We summarize the results of computing $\norm{\bm{S}_k - \bm{S}_k^\top}_F$ and other relations using the scaled discretized operators in Table \ref{tab:symmetric_check_scaled},
  where $\norm{\cdot}_F$ is the Frobenius norm.
  Table \ref{tab:symmetric_check_prime} using the non-scaled discretized operators is a counterpart of Table \ref{tab:symmetric_check_scaled}.
  In these tables, $m$ is the number of the local corrected points for the zeta-corrected quadrature.
  From Table \ref{tab:symmetric_check_scaled},
  we can see that there is no noticeable difference for the unit circle for $\bm{S}_k$, $\bm{D}_k$, $\bm{D}_k^*$, and $\bm{N}_k$.
  However, the use of the row-scaled $\bm{N}_k$ combined with the one-point correction ($m = 1$) is essential for the smooth star, which lacks local symmetry.
  The results in Table \ref{tab:symmetric_check_prime} confirm that local symmetry of the boundary shape is directly linked to the symmetry of the coefficient matrix generated by the standard Nystr\"om method.
  \begin{table}[htbp]
    \centering
    \caption{Numerical symmetries of row-scaled discretized operators \eqref{eq:scaled_slp}, \eqref{eq:scaled_dlp}, \eqref{eq:scaled_d_slp}, and \eqref{eq:scaled_d_dlp}.}
    \label{tab:symmetric_check_scaled}
    \resizebox{1.0\linewidth}{!}{
      \begin{tabular}{llcccc}
        \toprule
        Geometry & $m$ & $\norm{\bm{S}_k - \bm{S}_k^\top}_F$ & $\norm{\bm{D}_k - {\bm{D}_k^*}^\top}_F$ &  $\norm{\bm{N}_k - \bm{N}_k^\top}_F$ & $\norm{(\bm{N}_k - \bm{N}_{\tilde{k}}) - (\bm{N}_k - \bm{N}_{\tilde{k}})^\top}_F$ \\
        \midrule
        Unit circle & 1  & $4.74 \times 10^{-17}$ & $1.05 \times 10^{-16}$ & $1.44 \times 10^{-17}$ & $2.02 \times 10^{-17}$ \\
        & 41 & $4.74 \times 10^{-17}$ & $1.05 \times 10^{-16}$ & $4.52 \times 10^{-17}$ & $3.88 \times 10^{-17}$ \\
        \midrule
        Smooth star & 1  & $1.19 \times 10^{-16}$ & $1.67 \times 10^{-16}$ & $5.22 \times 10^{-17}$ & $6.13 \times 10^{-17}$ \\
        & 41 & $1.20 \times 10^{-16}$ & $1.67 \times 10^{-16}$ & $2.21 \times 10^{-3}$ & $8.58 \times 10^{-17}$  \\
        \bottomrule
      \end{tabular}
    }
  \end{table}
  \begin{table}[htbp]
    \centering
    \caption{Numerical symmetries of non-scaled discretized operators as references.}
    \label{tab:symmetric_check_prime}
    \resizebox{1.0\linewidth}{!}{
      \begin{tabular}{llcccc}
        \toprule
        Geometry    & $m$ & $\norm{\bm{S}_k^\prime - {\bm{S}_k^\prime}^\top}_F$  & $\norm{\bm{D}_k^\prime - {{\bm{D}_k^*}^\prime}^\top}_F$ & $\norm{\bm{N}_k^\prime - {\bm{N}_k^\prime}^\top}_F$ & $\norm{(\bm{N}_k^\prime - \bm{N}_{\tilde{k}}^\prime) - (\bm{N}_k^\prime - \bm{N}_{\tilde{k}}^\prime)^\top}_F$ \\
        \midrule
        Unit circle & 1   & $1.28 \times 10^{-16}$                            & $1.80 \times 10^{-16}$ & $5.19 \times 10^{-17}$ & $2.02 \times 10^{-17}$ \\
                    & 41  & $1.28 \times 10^{-16}$                            & $1.80 \times 10^{-16}$ & $8.93 \times 10^{-17}$ & $3.88 \times 10^{-17}$ \\
        \midrule
        Smooth star & 1   & $5.31 \times 10^{-1}$                             & $5.17 \times 10^{-1}$  & $5.26 \times 10^{-2}$ & $1.87 \times 10^{-3}$ \\
                    & 41  & $5.31 \times 10^{-1}$                             & $5.17 \times 10^{-1}$  & $5.84 \times 10^{-2}$ & $1.20 \times 10^{-3}$ \\
        \bottomrule
      \end{tabular}
    }
  \end{table}

  On the other hand, Table \ref{tab:symmetric_check_scaled} indicates that for $N_k - N_{\tilde{k}}$, which exhibits at most a weak singularity, symmetry can be preserved through row scaling even with $m = 41$.
  This suggests that solvers based on strong admissibility can leverage symmetry to enhance the efficiency of low-rank approximation.
  Indeed, the symmetric M\"uller-like boundary integral equation presented in Section \ref{sec:bie} involves an operator containing $N_k - N_{\tilde{k}}$.
  However, since the development of a solver based on strong admissibility is beyond the scope of the present paper, we limit ourselves to suggesting its possibility.
\end{example}

\begin{remark}
If we allow ambiguous descriptions of function spaces of functions $\varphi, \psi : \Gamma \to \mathbb{C}$, the bilinear form,
\begin{equation}
  \bilinear*{ \varphi , \psi } = \int_\Gamma \varphi (x) \psi (x) \dd s(x),
\end{equation}
gives the following adjoint relations \cite{colton2013inverse}:
\begin{equation}
  \bilinear{S_k \varphi, \psi} = \bilinear{\varphi, S_k \psi},
  \quad \bilinear{D_k \varphi, \psi} = \bilinear{\varphi, D_k^* \psi},
  \quad \bilinear{N_k \varphi, \psi} = \bilinear{\varphi, N_k \psi}.
\end{equation}
The symmetric Galerkin boundary element method formulation can be constructed using appropriate basis and test functions owing to these adjoint relations.
In fact, the Galerkin method is used in the extension to the elastic scattering problem in Section \ref{sec:extension} for accelerating the low-rank approximation and the direct solver using the symmetric formulation similarly to the Helmholtz counterpart.
\end{remark}

\subsection{Boundary integral equations} \label{sec:bie}
The discussion in the previous subsection demonstrated that the discretization matrices associated with layer potentials can be symmetrized. However, this approach requires a one-point correction, which has the drawback of hindering the use of high-order quadrature rules.
To address this, we propose a Müller-like boundary integral equation that maintains symmetry while having a singularity that is at most weakly singular, in an attempt to recover the order of convergence.

We define the diagonal weight matrix $W$ by
\begin{equation}
W = \diag(w_1, w_2, \ldots, w_N).
\end{equation}
Clearly, $W$ is invertible.
For transmission problem \eqref{eq:problem}, it is well known that the PMCHWT boundary integral equation \cite{POGGIO1973159, Chang1977surface} uniquely solves the problem and that its solution depends continuously on perturbations of the right-hand side \cite{hiptmair2022spurious}.
The standard PMCHWT formulation in Nystr\"om discretized form is expressed as
\begin{equation}
  \mqty[
  -\qty(\varepsilon_{0} \bm{S}_{k_{0}}^\prime + \varepsilon_{1} \bm{S}_{k_1}^\prime) & \bm{D}_{k_0}^\prime + \bm{D}_{k_1}^\prime \\
  {\bm{D}_{ k_0}^{\ast \prime}} + {\bm{D}_{k_1}^{\ast \prime}} & -\qty(\frac{1}{\varepsilon_{0}} \bm{N}_{k_0}^\prime + \frac{1}{\varepsilon_{1}} \bm{N}_{k_1}^\prime)
  ]
  \mqty[
  \bm{q} \\
  \bm{u}
  ]
  =
  \mqty[
  -\bm{u}^{\mathrm{in}} \\
  \frac{1}{\varepsilon_{0}} \bm{q}^{\mathrm{in}}
  ], \label{eq:naive_pmchwt}
\end{equation}
where $\bm{u}$, $\bm{q}$, $\bm{u}^{\mathrm{in}}$, and $\bm{q}^{\mathrm{in}}$ are the discretized $u, q$ of \eqref{eq:problem},
 the discretized incident wave on $\Gamma$, and that latter's normal derivative, respectively.
Recalling symmetric relations \eqref{eq:syms} and multiplying the first and second rows of this equation by $W$, we obtain the symmetric PMCHWT boundary integral equation as
\begin{align}
  \mqty[
  W & \\
  & W
  ]
  \mqty[
  -\qty(\varepsilon_{0} \bm{S}_{k_{0}}^\prime + \varepsilon_{1} \bm{S}_{k_1}^\prime) & \bm{D}_{k_0}^\prime + \bm{D}_{k_1}^\prime \\
  {\bm{D}_{ k_0}^{\ast \prime}} + {\bm{D}_{k_1}^{\ast \prime}} & -\qty(\frac{1}{\varepsilon_{0}} \bm{N}_{k_0}^\prime + \frac{1}{\varepsilon_{1}} \bm{N}_{k_1}^\prime)
  ]
  \mqty[
  \bm{q} \\
  \bm{u} 
  ]
  &=
  \mqty[
  W & \\
  & W
  ]
  \mqty[
  -\bm{u}^{\mathrm{in}} \\
  \frac{1}{\varepsilon_{0}} \bm{q}^{\mathrm{in}}
  ] \nonumber \\
  \mqty[
  -\qty(\varepsilon_{0} \bm{S}_{k_{0}} + \varepsilon_{1} \bm{S}_{k_1}) & \bm{D}_{k_0} + \bm{D}_{k_1} \\
  {\bm{D}_{ k_0}^{\ast}} + {\bm{D}_{k_1}^{\ast}} & -\qty(\frac{1}{\varepsilon_{0}} \bm{N}_{k_0} + \frac{1}{\varepsilon_{1}} \bm{N}_{k_1})
  ]
  \mqty[
  \bm{q} \\
  \bm{u} 
  ]
  &=
  \mqty[
  -W \bm{u}^{\mathrm{in}} \\
  \frac{1}{\varepsilon_{0}} W \bm{q}^{\mathrm{in}}
  ]. \label{eq:symmetric_pmchwt}
\end{align}
Since $\diag \{W, W \}$ is also invertible,
the coefficient matrix of \eqref{eq:symmetric_pmchwt} is invertible if the coefficient matrix of \eqref{eq:naive_pmchwt} is invertible.
As will be demonstrated in Section \ref{sec:numerical_demo},
 an efficient fast direct solver can be constructed based on this symmetric PMCHWT formulation,
 but its convergence rate is $O(h)$.
This rate coincides with error estimate \eqref{eq:estimate_N} with respect to $N_k$.

Therefore, we propose the following M\"uller-like boundary integral equation, in which the integral operator is at most weakly singular while also possessing symmetry:
\begin{multline}
  \mqty[
  -\qty(\varepsilon_{0}^{2} \bm{S}_{k_{0}} - \varepsilon_{1}^2 \bm{S}_{k_1})
  &
  \varepsilon_{0} \qty(\bm{D}_{k_0} - \frac{1}{2}W)- \varepsilon_{1} \qty(\bm{D}_{k_1} + \frac{1}{2}W)
  \\
  \varepsilon_{0} \qty(\bm{D}_{k_0}^{\ast} + \frac{1}{2}W)
  - \varepsilon_{1} \qty(\bm{D}_{k_1}^{\ast} - \frac{1}{2}W)
  &
  -\qty(\bm{N}_{k_0} - \bm{N}_{k_1})
  ]
  \mqty[
  \bm{q} \\
  \bm{u}
  ] \\
  =
  \mqty[
  -\varepsilon_0 W \bm{u}^{\mathrm{in}} \\
  \frac{1}{\varepsilon_{0}} W \bm{q}^{\mathrm{in}}
  ], \label{eq:symmetric_muller}
\end{multline}
where $\frac{1}{2}W$ is the term arising from the free terms.
This equation is derived by rearranging the layer potentials of ``M\"uller3'' from \cite{misawa2012jascome},
scaling them by material constants and $W$,
and multiplying one of the rows by $-1$.
It should be noted that \eqref{eq:symmetric_muller} is not a Fredholm integral equation of the second kind, as its off-diagonal parts contain free terms.
Consequently, we refer to this formulation as ``M\"uller-like''.
Furthermore, we emphasize that \eqref{eq:symmetric_muller} maintains symmetry, except for these aforementioned free terms.
Therefore, \eqref{eq:symmetric_muller} is well suited for fast direct solvers, though it is less favorable for iterative solvers.
In fact, we will demonstrate in Section \ref{sec:numerical_demo} that a fast direct solver based on \eqref{eq:symmetric_muller} achieves a convergence rate of $O(h^3)$.

\section{Low-rank approximation exploiting symmetry in a fast direct solver} \label{sec:fds}
By observing the procedure of the proxy method \cite{MARTINSSON20051},
it becomes immediately apparent that the computational cost of the low-rank approximation can be reduced
if the boundary integral equations exhibit symmetry.
The computational cost for constructing low-rank approximations
is dominant in a fast direct solver for boundary integral equations using the proxy method \cite{matsumoto2026fast}.
In this section, we focus only on symmetric M\"uller-like formulation \eqref{eq:symmetric_muller}, but the same arguments apply to symmetric PMCHWT formulation \eqref{eq:symmetric_pmchwt}.

\subsection{Block formulation and low-rank approximation}
Assuming that there are $N$ quadrature points on $\Gamma$,
 the coefficient matrix in \eqref{eq:symmetric_muller} has dimensions $2N \times 2N$.
By assigning an index to each quadrature point, the entire index set $J_0 = \{1, 2, \ldots, N \}$ can be partitioned into $p$ subsets $J_1, J_2, \ldots, J_p$ such that $J_0 = \bigcup_{i = 1}^p J_i$.
For each $i = 1, 2, \ldots, p$, index set $J_i$ corresponds to the subinterval $\Gamma_i$ of $\Gamma$.
For simplicity, we assume that each subset has the same size $n$, which yields $N = pn$.
The linear algebraic equation arising from \eqref{eq:symmetric_muller} can be transformed into the following block forms by an appropriate reordering:
\begin{equation}
  \mqty[
  A_{11} & A_{12} & \cdots & A_{1p} \\
  A_{21} & A_{22} & \cdots & A_{2p} \\
  \vdots & \vdots &\ddots &\vdots \\
  A_{p1} & A_{p2} & \cdots & A_{pp}
  ]
  \mqty[
  x_1 \\
  x_2 \\
  \vdots \\
  x_p
  ]
  =
  \mqty[
  f_1 \\
  f_2 \\
  \vdots \\
  f_p
  ],
  \label{eq:block_linear}
\end{equation}
where subscripts $i$ and $j$ denote the blocks corresponding to index subsets $J_i$ and $J_j$, respectively.
For $i, j = 1, 2, \ldots, p$, block $A_{ij}$ has a $2 \times 2$ structure expressed as
\begin{multline}
  A_{ij} = \mqty[
    A_{ij}^{(11)} & A_{ij}^{(12)} \\
    A_{ij}^{(21)} & A_{ij}^{(22)}
  ]
  =
  \\
  \mqty[
  -\qty(\varepsilon_{0}^{2} \bm{S}_{k_{0}}^{ij} - \varepsilon_{1}^2 \bm{S}_{k_1}^{ij})
  &
  \varepsilon_{0} \qty(\bm{D}_{k_0}^{ij} - \delta_{ij}\frac{W}{2})- \varepsilon_{1} \qty(\bm{D}_{k_1}^{ij} + \delta_{ij}\frac{W}{2})
  \\
  \varepsilon_{0} \qty({\bm{D}_{k_0}^{\ast ij}} + \delta_{ij}\frac{W}{2})
  - \varepsilon_{1} \qty({\bm{D}_{k_1}^{\ast ij}} - \delta_{ij}\frac{W}{2})
  &
  -\qty(\bm{N}_{k_0}^{ij} - \bm{N}_{k_1}^{ij})
  ],
  \label{eq:block_A}
\end{multline}
where $\delta_{ij}$ is the Kronecker delta.
Here, superscript $ij$ of the discretized layer operators $\bm{S}_{k}, \bm{D}_{k}, \bm{D}_{k}^*,$ and $\bm{N}_{k}$ 
denotes the submatrix corresponding to $i$-th row and $j$-th column index sets $J_i$ and $J_j$, respectively.
In \eqref{eq:block_A}, the subblocks of $A_{ij}$ are denoted by $A_{ij}^{(11)}$, $A_{ij}^{(12)}$, $A_{ij}^{(21)}$, and $A_{ij}^{(22)}$, corresponding to the top-left, top-right, bottom-left, and bottom-right components, respectively.
In \eqref{eq:block_linear}, $x_i$ and $f_i$ are the subvectors of the solution and the right-hand side of \eqref{eq:symmetric_muller} associated with $J_i$, respectively.

In two-dimensional boundary integral equations, since matrix components with similar indices can often be arranged to be spatially close, many off-diagonal blocks of \eqref{eq:block_linear} admit  low-rank approximations \cite{ma2022scalable}.
For the present study, we assumed that all off-diagonal blocks of \eqref{eq:block_linear} are low-rank approximable,
which means that \eqref{eq:block_linear} has the structure of the hierarchically semiseparable (HSS) representation \cite{chandrasekaran2005Calcolo}.
Therefore, the $2 \times 2$ block in the off-diagonal part $A_{ij}$, for $i, j = 1, 2, \ldots, p$ with $i \neq j$, can be low-rank approximated as
\begin{equation}
  A_{ij}
  \approx L_{i} S_{ij} R_{j} =
  \mqty[L_{i}^{(1)} & \\ & L_{i}^{(2)}]
  \mqty[
    A_{\tilde{i}\tilde{j}}^{(11)} & A_{\tilde{i}\tilde{j}}^{(12)} \\
  A_{\tilde{i}\tilde{j}}^{(21)} & A_{\tilde{i}\tilde{j}}^{(22)}
  ]
  \mqty[R_{j}^{(1)} & \\ & R_{j}^{(2)}]
  \label{eq:low-rank}
\end{equation}
where 
$S_{ij} \in \mathbb{C}^{2 k \times 2 k}$ with $k \ll n$
is the so-called skeleton matrix of $A_{ij}$, and
$L_{i} \in \mathbb{C}^{2 n \times 2k}$ and $R_{j} \in \mathbb{C}^{2 k \times 2 n}$
are the left and right shared coefficients  for linear combinations, respectively.
The submatrices $L_{i}^{(1)} \in \mathbb{C}^{n \times k}$ and $L_{i}^{(2)} \in \mathbb{C}^{n \times k}$ interpolate the first and second block rows of $S_{ij}$.
Similarly, the submatrices $R_{i}^{(1)} \in \mathbb{C}^{k \times n}$ and $R_{i}^{(2)} \in \mathbb{C}^{k \times n}$ interpolate the first and second block columns of $S_{ij}$.
Here, subscripts $\tilde{i}\tilde{j}$ of $A_{\tilde{i}\tilde{j}}^{(pq)}$ ($p, q = 1, 2$) are the skeleton indices selected from the $i$-th and $j$-th index sets using the proxy method \cite{MARTINSSON20051} and the double-sided interpolative decomposition \cite{cheng2005compression}.

\begin{figure}[tbp]
  \begin{minipage}[c]{0.54\linewidth}
    \centering
    \includegraphics[width = 0.4\linewidth]{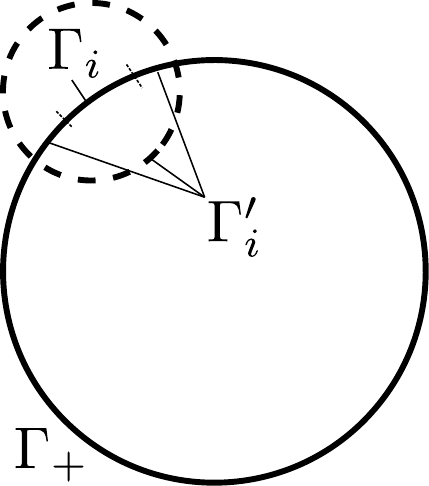}
    \caption{Local virtual boundary in proxy method.}
    \label{fig:proxy}
  \end{minipage}
  \hfill
  \begin{minipage}[c]{0.44\linewidth}
    \centering
    \captionof{table}{Relationship between $\Gamma_{i}$ and $\Gamma_{i}^\prime$.}
    \label{tab:observation}
            {\footnotesize
              \begin{tabular}{lll}
                \toprule
                & For $L_i$              & For $R_i$                 \\
                \midrule
                Observations & $\Gamma_{i}$        & $\Gamma_{i}^{\prime}$ \\
                Sources      & $\Gamma_{i}^{\prime}$ & $\Gamma_{i}$          \\
                \bottomrule
              \end{tabular}
            }
  \end{minipage}
\end{figure}
In the standard proxy method, a local virtual boundary is introduced to enclose subset $\Gamma_i \subset \Gamma$ associated with index set $J_i$, as illustrated in Figure \ref{fig:proxy}.
We denote by $\Gamma_i^\prime$ the union of this local virtual boundary and a neighborhood of $\Gamma_i$.
According to potential theory, the interaction between $\Gamma_i$ and the remainder of the boundary, $\Gamma_+ = \Gamma \setminus \Gamma_i$, can be replaced by the interaction between $\Gamma_i$ and $\Gamma_i^\prime$ \cite{MARTINSSON20051}.
We denote these interaction matrices by $M_{R_i}$ and $M_{L_i}$, defined as
\begin{align}
  M_{R_i} = \mqty[
    A_{i^\prime j}^{(11)} & A_{i^\prime j}^{(12)} \\
    A_{i^\prime j}^{(21)} & A_{i^\prime j}^{(22)}
  ], \quad
  M_{L_i} = \mqty[
    A_{ij^\prime}^{(11)} & A_{ij^\prime}^{(12)} \\
    A_{ij^\prime}^{(21)} & A_{ij^\prime}^{(22)}
  ],
\end{align}
where the subscripts $i^\prime j$ and $ij^\prime$ represent the interactions between $\Gamma_i^\prime$ and $\Gamma_i$, and between $\Gamma_i$ and $\Gamma_i^\prime$, respectively, as listed in Table \ref{tab:observation}.
We obtain $L_i^{(1)}$ and $L_i^{(2)}$ by factorizing respectively the upper half and lower half of $M_{L_i}$ using the interpolative decomposition.
Similarly, $R_i^{(1)}$ and $R_i^{(2)}$ are obtained by factorizing respectively the left half and right half of $M_{R_i}$ using the interpolative decomposition.

We note that the proxy method can be accelerated by simply setting $L_i = R_i^\top$, provided that the condition $[M_{L_i}]^\top = M_{R_i}$ is satisfied.
More precisely, $A_{ij}$ can be low-rank approximated as
\begin{equation}
  A_{ij}
  \approx R_{i}^\top S_{ij} R_{j} =
  \mqty[ [R_{i}^{(1)}]^\top & \\ & [R_{i}^{(2)}]^\top]
  \mqty[
    A_{\tilde{i}\tilde{j}}^{(11)} & A_{\tilde{i}\tilde{j}}^{(12)} \\
  A_{\tilde{i}\tilde{j}}^{(21)} & A_{\tilde{i}\tilde{j}}^{(22)}
  ]
  \mqty[R_{j}^{(1)} & \\ & R_{j}^{(2)}].
  \label{eq:low-rank_R}
\end{equation}
This approach requires computing only $M_{R_i}$, and reduces the number of factorizations by half in the proxy method.
In the proposed procedure, since both the discretization scheme described in Section \ref{sec:nystrom} and the boundary integral equations in \eqref{eq:symmetric_muller} possess symmetry, the acceleration using \eqref{eq:low-rank_R} can be readily implemented.

\begin{remark}
There exists a method for enforcing $L_i = R_i^\top$ by factorizing the augmented matrix $[ M_{L_i} \mid M_{R_i}^\top ]$ in non-symmetric formulations \cite[Remark 8]{cheng2005compression}. However, that approach is not suitable for accelerating the proxy method, as it requires computing both $M_{L_i}$ and $M_{R_i}$ and factorizing the double-sized matrix.
Moreover, a disadvantage regarding the low-rank approximation accuracy has been reported \cite{cheng2005compression}.
\end{remark}

\begin{remark}
The standard double-sided interpolative decomposition for a complex matrix requires that not only the transpose be taken but also the complex conjugate, when computing left coefficient $L_i$ \cite{martinsson2019book}.
We note that taking the complex conjugate is not essential for the double-sided interpolative decomposition.
\end{remark}

\subsection{Compression technique for linear algebraic equations}
To solve \eqref{eq:block_linear} efficiently, we apply a multilevel compression technique for linear algebraic equations.
This fast direct solver can be interpreted as a variant of the Martinsson--Rokhlin solver \cite{MARTINSSON20051} or the HSS-ULV decomposition \cite{chandrasekaran2006fast}.
Since the algorithm is almost identical to those in prior studies \cite{MATSUMOTO2025106148, matsumoto2025efficient},
 except for the improved efficiency of the low-rank approximation, we keep its description to a minimum.

Further, since all off-diagonal blocks in \eqref{eq:block_linear} are low-rank approximated using the shared coefficient $R_i$
as in \eqref{eq:low-rank_R}, we can compress the linear algebraic equations block-row-wise.
For $i = 1, 2, \ldots, p$, the $i$-th row-block equation is expressed as
\begin{align}
  A_{ii} x_{i} + \sum_{j = 1, j \neq i}^p R_i^\top S_{ij} R_{j} x_{j} = f_i.
\end{align}
Using $y_i = R_i x_i$ and $\tilde{A}_i = [R_i A_{ii}^{-1} R_i^\top]^{-1}$, this equation is compressed into
\begin{align}
  \tilde{A}_{i} y_{i} + \sum_{j = 1, j \neq i}^p S_{ij} y_{j} = \tilde{A}_{i} R_i A_{ii}^{-1} f_i.
  \label{eq:compressed_eq}
\end{align}
This transformation results in the compressed system, whose coefficient matrix has dimensions $2kp \times 2kp$, expressed as
\begin{align}
  \mqty[
    \tilde{A}_{1} & S_{12} & \cdots & S_{1p} \\
    S_{21} & \tilde{A}_{2} & \cdots & S_{1p} \\
    \vdots & \vdots & \ddots & \vdots \\
    S_{p1} & S_{p2} & \cdots & \tilde{A}_{p} \\
  ]
  \mqty[
    y_1 \\
    y_2 \\
    \vdots \\
    y_p
  ]
  =
  \mqty[
    \tilde{A}_{1} R_1 A_{11}^{-1} f_1 \\
    \tilde{A}_{2} R_2 A_{22}^{-1} f_2 \\
    \vdots \\
    \tilde{A}_{p} R_p A_{pp}^{-1} f_p
  ].
\end{align}
This system can be solved more efficiently than original system \eqref{eq:block_linear},
which has a coefficient matrix of dimensions $2N \times 2N$ (where $N = np$).
Compressed solution $y_i$ can be converted into original solution $x_i$ using the following block-row-wise relation:
\begin{align}
  x_i = A_{ii}^{-1} f_i - A_{ii}^{-1} R_i^{T} (\tilde{A}_{i} R_i A_{ii}^{-1} f_i - \tilde{A}_{i} y_i),
\end{align}
for $i = 1, 2, \ldots, p$.
Although this procedure is a single-level algorithm, it can be extended to a multi-level one
by using that skeleton blocks $S_{ij}$ can be compressed recursively by virtue of the properties of the fundamental solution.
For details on the multi-level algorithm, see \cite{MATSUMOTO2025106148}.

\subsection{Rank scaling parameter in the multilevel algorithm}
In the preceding discussion, row and column blocks of size $2n$ were uniformly low-rank approximated with a fixed rank $2k$.
However, in wave scattering problems, higher levels of the tree structure require increasingly larger ranks to maintain the accuracy for the approximation \cite{rokhlin1990rapid}.
Although the rank can be dynamically determined for each row,
 it is preferable from the perspective of parallel efficiency that the matrix sizes at each level be predetermined using fixed ranks.
Therefore, from a practical viewpoint,
we can consider an implementation that gradually increases the rank so that the rank at a higher level is a fixed multiple, $a$, of the rank at the preceding lower level.
In this subsection, we investigate this scaling parameter $a$.
For simplicity, in the following theorem, the size $2N$ arising from the transmission problem is written as $N$.

\begin{theorem} \label{thm:order}
Consider a perfect binary tree of depth $L$ with its root at level $0$, which partitions a total of $N \in \mathbb{N}$ quadrature points.
Suppose that there exist $p = 2^L$ clusters at level $L$, each of size $n$, such that $N = np$.
At level $L$, all off-diagonal blocks of size $n$ are approximated with initial rank $k_L \in \mathbb{N}$, after which every two clusters are merged into a size of $2k_L$ at level $L-1$.
At any level $l$ ($0 < l < L$), each off-diagonal block of size $2a^{L-l-1}k_L$ is approximated by a rank $k_l = a^{L - l} k_L$ matrix,  where $a \ge 1$ denotes a scaling parameter.
Then, the number of floating-point operations for the overall multilevel fast direct solver presented in Section \ref{sec:fds} is bounded by
\begin{equation}
  \begin{dcases}
    O(N) & (1 \le a < 2^{1/3}), \\
    O(N \log_2 N) & (a = 2^{1/3}), \\
    O(N^{3 \log_2 a}) & (a > 2^{1/3}),
  \end{dcases}
\end{equation}
where $2^{1/3} = 1.25992\cdots$.
\end{theorem}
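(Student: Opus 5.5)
The plan is to count the work level by level and sum a geometric series. The dominant costs at each level are three dense operations on every cluster: the interpolative decomposition of the proxy interaction matrix, the inversion $A_{ii}^{-1}$, and the products forming $\tilde{A}_i = [R_i A_{ii}^{-1} R_i^\top]^{-1}$. Each of these costs $O(m^3)$, where $m$ is the size of the diagonal block at that level. (The proxy matrix has $O(m)$ rows and columns, the number of proxy points being taken proportional to the rank.) We also need the off-diagonal skeleton blocks $S_{ij}$. In the HSS structure each cluster interacts with only $O(1)$ blocks that are not inherited from the parent, so this contributes at most $O(m^2)$ per cluster per level, which is dominated.

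\textbf{Per-level cost.} At the leaf level $L$ there are $2^L = p$ clusters of size $n$, costing $O(p n^3) = O(N n^2)$. Treating $n$ and $k_L$ as fixed constants, this is $O(N)$. At level $l$ with $0<l<L$ there are $2^l$ clusters. Each is formed by merging two children of rank $k_{l+1} = a^{L-l-1}k_L$, so its size is $m_l = 2a^{L-l-1}k_L$, and it is compressed to rank $k_l = a^{L-l}k_L \le m_l$ because $a\le 2$ is implicit. (If $a>2$ one takes $k_l=\min$ and the bound only improves.) The cost at level $l$ is therefore
\[
C_l = O\bigl(2^l m_l^3\bigr) = O\bigl(2^l a^{3(L-l)} k_L^3\bigr) = O\bigl(k_L^3\, 2^L (a^3/2)^{L-l}\bigr).
\]
The root level $0$ is a dense solve of size $2k_1 = O(a^{L}k_L)$, costing $O(a^{3L}k_L^3)$, which matches the $l=0$ term of the same formula.

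\textbf{Summing over levels.} Put $r = a^3/2$ and substitute $j=L-l$. The total is
\[
O\Bigl(k_L^3 2^L \sum_{j=0}^{L} r^j\Bigr) + O(Nn^2).
\]
Three cases follow.
\begin{itemize}
\item If $r<1$, i.e.\ $a<2^{1/3}$, the sum is bounded by a constant and the total is $O(2^L) = O(N)$.
\item If $r=1$, the sum equals $L+1$, giving $O(2^L L) = O(N\log_2 N)$.
\item If $r>1$, the sum is $O(r^L)$ and the total is $O(2^L r^L) = O(a^{3L})$. Since $2^L = N/n$, we have $a^{3L} = (2^L)^{3\log_2 a} = O(N^{3\log_2 a})$.
\end{itemize}

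\textbf{Main obstacle.} The main difficulty is justifying that every level's cost really is $O(2^l m_l^3)$ and nothing larger. First, the proxy-based ID at level $l$ must act on a matrix whose dimensions are proportional to $m_l$, not to the full boundary size. This holds because the proxy surface replaces the far field with a number of points proportional to the rank. Second, the operations on the compressed system must stay local to each cluster. The recursive compression of the $S_{ij}$ (as in \cite{MATSUMOTO2025106148}) ensures that each level only handles diagonal blocks of size $m_l$ and $O(1)$ new skeleton blocks per cluster. Third, the symmetric variant \eqref{eq:low-rank_R} changes only constants, not the order. I would verify these three facts from the algorithm description first, then carry out the geometric-sum bookkeeping above.
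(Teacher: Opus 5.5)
Your proposal is correct and follows essentially the same route as the paper: both bound the total work by the cubic cost of the per-cluster diagonal-block operations at each level, $O(2^l (2a^{L-l-1}k_L)^3)$, and sum the resulting geometric series. The paper treats the leaf, intermediate and root levels separately and uses ratio $2a^{-3}$ after factoring out $a^{3(L-1)}$, whereas you fold the root into one sum with ratio $r=a^3/2$; your check that the proxy ID and the formation of $\tilde{A}_i$ are also $O(m_l^3)$ per cluster is a slightly more careful version of the paper's bare assertion that diagonal-block inversion dominates.
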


\begin{proof}
  For $a = 1$, the overall complexity of the solver is $O(N)$ \cite{MARTINSSON20051}.
Although the compression algorithm includes many operations, such as interaction calculations in the proxy method and the interpolative decomposition of the interaction matrices, the diagonal block inversion accounts for the dominant share of floating-point operations.
Moreover, the computational cost of converting the compressed solution back to the original solution is much smaller than that of the compression phase.
Therefore, the total number of floating-point operations is bounded by the following cost required for the inversion of the block-diagonal matrices from leaf level $L$ to the root:
\begin{equation}
O(n^3 2^L) + O \left( \sum_{l = 1}^{L-1} \left( 2a^{L-l-1}k_L \right)^3 2^l \right) + O\left( \left( 2a^{L-1} k_L \right)^3 \right), \label{eq:estimate_cost}
\end{equation}
where $2^l$ denotes the number of clusters, and the first, second, and third terms correspond to the leaf, intermediate, and root levels, respectively.
\begin{itemize}
\item \textbf{Leaf level:} We immediately observe that the cost at the leaf level is $O(N)$; this is because  $2^L = O(N)$ follows from $L = O(\log_2 N)$ and $n = O(1)$.
\item \textbf{Root level:} The third term of \eqref{eq:estimate_cost} can be evaluated as 
  where $k_L = O(1)$.
  Consequently, $a > 2^{1/3}$ implies a dominant complexity of $O(N^{3 \log_2 a})$, whereas the cost is bounded by $O(N)$ when $1 \le a \le 2^{1/3}$.
\item \textbf{Intermediate levels:}
  From the second term of \eqref{eq:estimate_cost},
  the cost for the inversion of diagonal blocks across all intermediate levels is given by
  \begin{equation}
    \begin{aligned}
    O \left( \sum_{l = 1}^{L - 1} 2^l \cdot a^{3(L - l - 1)} \right)
    &= O \left( a^{3(L - 1)} \sum_{l = 1}^{L - 1} (2a^{-3})^{l} \right) \\
    &= \begin{dcases}
      O(N) & (1 \le a < 2^{1/3}), \\
      O(N \log_2 N) & (a = 2^{1/3}), \\
      O(N^{3 \log_2 a}) & (a > 2^{1/3}).
      \end{dcases}
    \end{aligned}
  \end{equation}
\end{itemize}
  Combining these estimates yields the statement.
\end{proof}

\begin{remark}
Theorem \ref{thm:order} can be summarized as follows: for $1 \le a < 2^{1/3}$, the complexity is bounded by $O(N)$ at the leaf level; for $a > 2^{1/3}$, it is bounded by $O(N^{3 \log_2 a})$ at the root level; and for $a = 2^{1/3}$, each intermediate level yields the same computational load as the leaf level, resulting in an overall bound of $O(N \log_2 N)$.
\end{remark}

\section{Extension to elastic transmission problems} \label{sec:extension}
We extend the efficient low-rank approximation technique presented in Section \ref{sec:fds},
 which exploits symmetry, to two-dimensional elastic wave scattering problems with transmissive inclusions.
In elastic wave scattering as well, several corrected quadrature schemes have been developed
 for the Nystr\"om discretization \cite{dominguez2024nystrom, tong2007nystrom}.
However, their construction is generally more complex than that for Helmholtz scattering,
 due to the nature of the elastodynamic fundamental solution.
Therefore, the symmetric Galerkin boundary element method \cite{bonnet1998symmetric} is attractive.
In this discretization method, not only are the singularity of the (elastodynamic) fundamental solution mitigated through integration by parts using appropriate test functions, but the resulting coefficient matrix is symmetric.

The difference in formulation compared to the Helmholtz equation lies in the fact that we are dealing with a vector field rather than a scalar field.
Specifically, within domain $\Omega_m$ ($m = 0, 1$), we consider time-harmonic scattered wave $[v_m^{\mathrm{sc}}(x)]_i$.
This wave is associated with the $i$-th direction of Cartesian coordinate system $x = [x_1, x_2] \in \mathbb{R}^2$
and is governed by the Navier--Cauchy equations.
We assume that $\Omega_m$ is filled by a linearly elastic, homogeneous, and isotropic solid with density $\rho^{(m)}$ and Lam\'e constants $\lambda^{(m)}$ and $\mu^{(m)}$.
In $\Omega_0$, there is an incident wave $v^{\mathrm{in}}$ that solves the Navier--Cauchy equation in $\mathbb{R}^2$.
The elastic transmission problem is to find the solution $v_m^{\mathrm{sc}}(x)$ of the Navier--Cauchy equation in $\Omega_m$ as
\begin{align}
    \mu^{(m)} \Delta [v_{m}^{\mathrm{sc}}]_i (x) + (\lambda^{(m)} + \mu^{(m)}) \sum_{j=1}^{2} \pdv{[v_{m}^{\mathrm{sc}}(x)]_j}{x_j}{x_i} + \rho^{(m)} \omega^{2} [v_{m}^{\mathrm{sc}}]_i (x) = 0, \\
    \hfill m = 0, 1, \quad i = 1, 2, \quad x \in \Omega_{m}, \label{eq:navier}
\end{align}
with the transmission condition and the outgoing radiation condition for elastic waves \cite{eringen1975elastodynamics}.
Similarly to Helmholtz scattering, 
we define boundary unknowns $v = [v_1, v_2]$ by passing the limit of the total field into $\Gamma$,
where $v_i$ denotes the $i$-th directional component of $v$.
Let $t = [t_1, t_2]$ be the traction of $v$.
We note that in this elastic wave scattering, the boundary regularity can be relaxed to the Lipschitz condition due to the use of the Galerkin method \cite{costabel1988boundary}.

To solve the elastic wave scattering problem using the Galerkin boundary element method,
 boundary $\Gamma$ is approximated by a polygonal mesh.
As with the discretization in our prior work \cite{matsumoto2026fast},
each directional component of traction $t$ is approximated by piecewise constant (P0) shape functions.
Therefore, $\bm{t}$ is a complex-valued vector composed of the values of $t$ at the centroids of the line segment element.
Similarly, each directional component of displacement $v$ is approximated by piecewise linear (P1) shape functions,
which constructs a complex-valued vector $\bm{v}$ composed of the values of $v$ at the vertices of the polygonal mesh.

It is well known that in linear elasticity problems, particularly in three dimensions, reducing hypersingular kernels to weakly singular ones via the subtraction of layer potential operators is highly non-trivial.
Recently, the Müller boundary integral equation has been proposed for two-dimensional elastic transmission problems \cite{LeLouer2026muller}.
 However, since it requires a product of operators, it is not well suited for a direct solver.
Therefore, the present paper focuses on the PMCHWT equation in elastic transmission problems.

The PMCHWT-formulated (Galerkin discretized) boundary integral equation is expressed as
\begin{equation}
  \mqty[
  -\qty(\bm{U}^{(0)} + \bm{U}^{(1)})   & \bm{T}^{(0)} + \bm{T}^{(1)} \\
  \bm{T}^{*(0)} + \bm{T}^{* (1)} & -\qty(\bm{H}^{(0)} + \bm{H}^{(1)})
  ]
  \mqty[
  \bm{t} \\
  \bm{v} 
  ]
  =
  \mqty[
       {-\bm{v}^{\mathrm{in}}} \\
       {\bm{t}^{\mathrm{in}}} \\
       ],
       \quad x \in \Gamma, \label{eq:elastic_pmchwt}
\end{equation}
where $\bm{U}^{(m)}$ and $\bm{T}^{(m)}$ represent the single- and double-layer operators in elastodynamics associated with the parameters in $\Omega_m$, and $\bm{T}^{*(m)}$ and $\bm{H}^{(m)}$ are their tractions.
Here, we define $\bm{v}^{\mathrm{in}}$ as the vector obtained by testing $v^{\mathrm{in}}$ with the P0 shape functions, 
and $\bm{t}^{\mathrm{in}}$ as the vector obtained by testing $t^{\mathrm{in}}$ with the P1 shape functions, 
where $t^{\mathrm{in}}$ is the traction of $v^{\mathrm{in}}$.
In the Galerkin method, P0 functions are used as the test functions for the first row of \eqref{eq:elastic_pmchwt},
 whereas P1 functions are used for the second row.

Analogous to \eqref{eq:syms}, this discretization yields symmetric relations:
\begin{equation}
  \bm{U}^{(m)} = [\bm{U}^{(m)}]^\top, \quad \bm{T}^{(m)} = [\bm{T}^{* (m)}]^\top, \quad \bm{H}^{(m)} = [\bm{H}^{(m)}]^\top.
\end{equation}
Therefore, the coefficient of \eqref{eq:elastic_pmchwt} is symmetric.
Provided that one notes that it handles a vector field,
 the application of the low-rank approximation method utilizing symmetry in the proxy method is straightforward.
 Furthermore, we can construct an efficient fast direct solver using symmetry, 
 in the same manner as its Helmholtz counterpart.
For the details of the proxy method with Galerkin discretization for elastic transmission problems,
 see \cite{matsumoto2026fast}.

\begin{remark}
Although near-field singularities require careful regularization to ensure overall matrix symmetry \cite{MARUYAMA202511-251219},
 the proposed low-rank approximation remains effective without issue.
 This is because the method exploits symmetry within the interaction between a partial boundary and its surrounding proxy boundary, which corresponds to a far-field, during the proxy method.
\end{remark}

\section{Numerical experiments} \label{sec:numerical_demo}
We demonstrate the performance of the proposed fast direct solver based on weak admissibility exploiting symmetry.
In Helmholtz scattering and elastic scattering, a plane wave and a longitudinal plane wave propagating in the $x_1$ direction are respectively used as incident waves.
For the numerical experiments, the material parameters for the Helmholtz scattering problem are set to $\varepsilon_0 = 1$ and $\varepsilon_1 = 3$.
 For the elastic scattering problem, the parameters are chosen as $\rho^{(0)} = 1$, $\lambda^{(0)} = 1$, $\mu^{(0)} = 1$, $\rho^{(1)} = 2$, $\lambda^{(1)} = 9$, and $\mu^{(1)} = 4.5$.

As the computational resource,
multiple compute nodes of the supercomputer TSUBAME4.0 at the Institute of Science Tokyo,
each with two 96-core AMD EPYC 9654 CPUs (192 cores total;
maximum frequency: 3708 MHz) and 768 GB of 0.92-TB/s DDR5 memory,
were used.
The proposed method was implemented using a hybrid MPI/OpenMP parallelization scheme \cite{matsumoto2026proxy}, by which each MPI process is associated with a single compute node.
 Furthermore, each process manages a specific subtree within the global tree structure, and its internal tasks are parallelized via OpenMP.
The code was implemented in C++, except for the calculating the discretized layer potential for elastic scattering, which was written in Fortran.

\subsection{Helmholtz transmission problems}

\begin{example}[Verification] \label{ex:hel_error}
We first verify the correctness of the implementation of the proposed fast direct solver exploiting symmetry.
Two compute nodes, corresponding to two MPI processes (with a total of 384 CPU cores), are used for this verification.
Following Theorem \ref{thm:order}, we set the initial rank to $k_L = 30$ and the rank scaling parameter $a = 1.15$. 
The standard M\"uller-like boundary integral equation, which corresponds to a non-scaled version of \eqref{eq:symmetric_muller}, is given by
\begin{equation}
  \mqty[
  -\qty(\varepsilon_{0}^{2} \bm{S}_{k_{0}}^\prime - \varepsilon_{1}^2 \bm{S}_{k_1}^\prime)
  &
  -\frac{\varepsilon_{0} + \varepsilon_{1}}{2}I + \varepsilon_{0} \bm{D}_{k_0}^\prime - \varepsilon_{1} \bm{D}_{k_1}^\prime
  \\
  \frac{\varepsilon_{0} + \varepsilon_{1}}{2}I + \varepsilon_{0} \bm{D}_{k_0}^{\ast \prime}
  - \varepsilon_{1} \bm{D}_{k_1}^{\ast \prime}
  &
  -\qty(\bm{N}_{k_0}^\prime - \bm{N}_{k_1}^\prime)
  ]
  \mqty[
  \bm{q} \\
  \bm{u}
  ]
  =
  \mqty[
  -\varepsilon_0 \bm{u}^{\mathrm{in}} \\
  \frac{1}{\varepsilon_{0}} \bm{q}^{\mathrm{in}}
  ], \label{eq:naive_muller}
\end{equation}
where $I$ is the identity matrix.
We use \eqref{eq:naive_pmchwt} and \eqref{eq:naive_muller} as reference formulations compared with the symmetric counterparts, \eqref{eq:symmetric_pmchwt} and \eqref{eq:symmetric_muller}.

Using the analytical solution for a unit circle,
the relative 2-norm errors of the numerical solution to it are calculated at the quadrature points, shown in Figure \ref{fig:helm_error}.
In this and subsequent figures, the degrees of freedom of the system are always twice the number of quadrature points.
As shown in Figure \ref{fig:helm_error}, the M\"uller-like and symmetric M\"uller-like (M\"uller-like Sym) formulations achieve $O(h^3)$ convergence, whereas the PMCHWT and symmetric PMCHWT (PMCHWT Sym) formulations exhibit only $O(h)$ convergence, where $h$ denotes the interval between quadrature points on the parameterized boundary.
Moreover, we see that the symmetric formulations do not degrade accuracy.
Furthermore, when the ratio of the frequency to the degrees of freedom is kept constant,
 the computational accuracy is also preserved.
These findings demonstrate the correctness of our implementation of the fast direct solver.
\begin{figure}[tb]
  \centering
  \includegraphics[width = 0.48\linewidth]{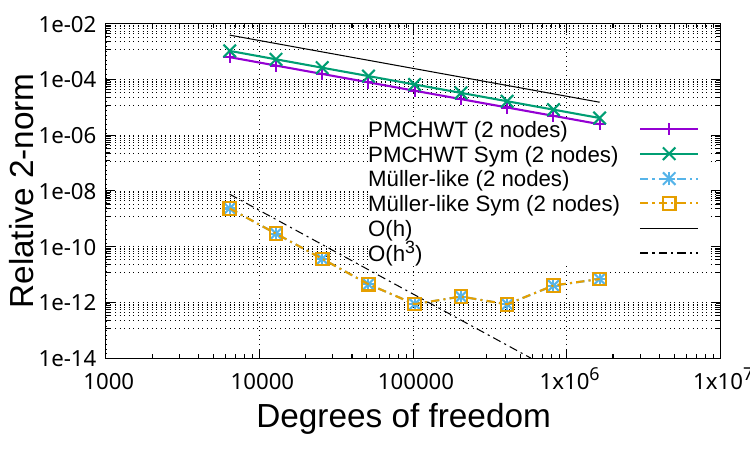}
  \hfill
  \includegraphics[width = 0.48\linewidth]{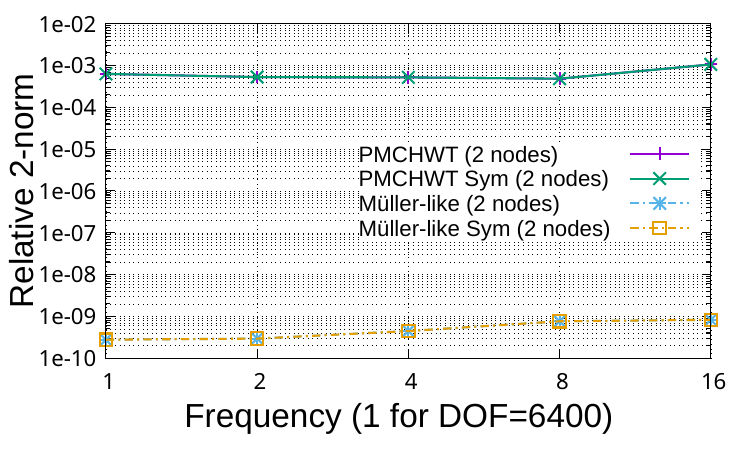}
  \caption{Relative 2-norm errors for unit circle in Helmholtz scattering. Left: errors obtained by varying the problem size while maintaining the frequency at $\omega = 2$. Right: errors obtained by maintaining a constant ratio between frequency and degrees of freedom (DOF).}
  \label{fig:helm_error}
\end{figure}

\end{example}

\begin{example}[Speedup]
We next measure the computational efficiency of the solver using the kite geometry shown in Figure \ref{fig:geometries}.
We again use an initial rank of 30 and the rank scaling parameter $a = 1.15$ as in Example \ref{ex:hel_error}.
Specifically, we compare the computational speeds of the symmetric \eqref{eq:symmetric_muller} and standard \eqref{eq:naive_muller} Müller-like formulations against Burton--Miller formulation \eqref{eq:bm} for the Helmholtz transmission problem.
The Burton--Miller formulation is given by
\begin{equation}
  \mqty[
    \bm{D}_{k_0}^\prime -\frac{I}{2} + \alpha \bm{N}_{k_0}^\prime
  &
    -\varepsilon_0 \bm{S}_{k_0}^\prime  + \alpha \varepsilon_0 \qty(\bm{D}_{k_0}^{* \prime} + \frac{I}{2})
  \\
  \bm{D}_{k_1}^\prime + \frac{I}{2}
  &
  -\varepsilon_1 \bm{S}_{k_1}^\prime
  ]
  \mqty[
  \bm{u} \\
  \bm{q}
  ]
  =
  \mqty[
  -\bm{u}^{\mathrm{in}} -\alpha \bm{q}^{\mathrm{in}} \\
  0
  ], \label{eq:bm}
\end{equation}
where $\alpha = i/k_0$ is a constant of the Burton--Miller methods.
This formulation requires only six layer potentials, which is fewer than the eight required by the PMCHWT and original Müller formulations.
 In fact, for the Burton--Miller formulation, it has been reported that the numerical computation can be accelerated by approximately 20\% when using a fast direct solver \cite{matsumoto2026fast}.

Figure \ref{fig:speed_helm} presents the computational times for each formulation. Notably, the symmetric M\"uller-like formulation achieves the shortest computational time. This demonstrates that, despite maintaining the same asymptotic order of floating-point operations, the symmetric formulation successfully reduces the leading constant factors, thereby fulfilling the intended objective. Although the Burton--Miller formulation outperforms the standard M\"uller-like formulation, it remains less efficient than its symmetric M\"uller-like counterpart.
When using eight compute nodes, the performance relationship among the formulations is less distinct than with fewer nodes.
This is likely since the computational resources are excessive relative to the workload.
 Even in this case, the symmetric M\"uller-like formulation is generally the fastest.
\begin{figure}[tb]
  \centering
  \includegraphics[width=0.48\textwidth]{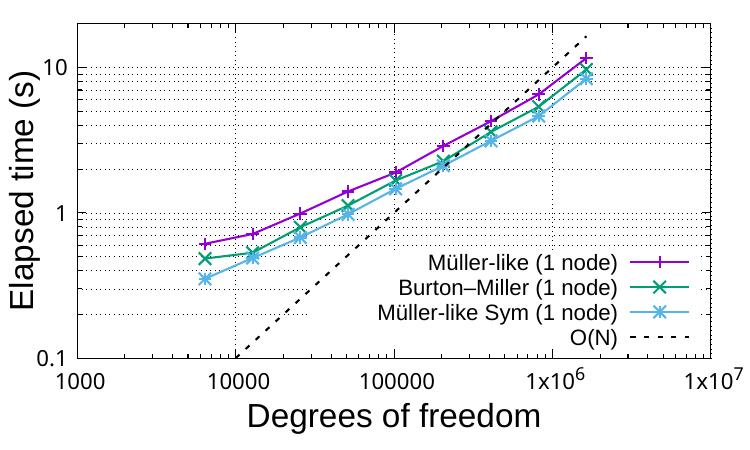}
  \hfill
  \includegraphics[width=0.48\textwidth]{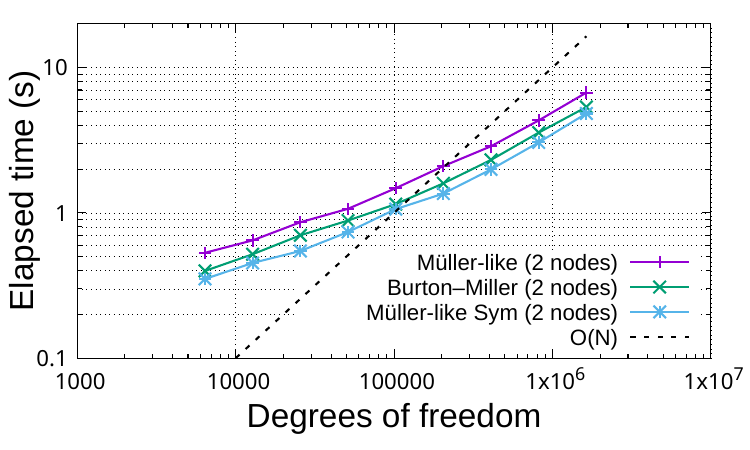}
  \\
  \includegraphics[width=0.48\textwidth]{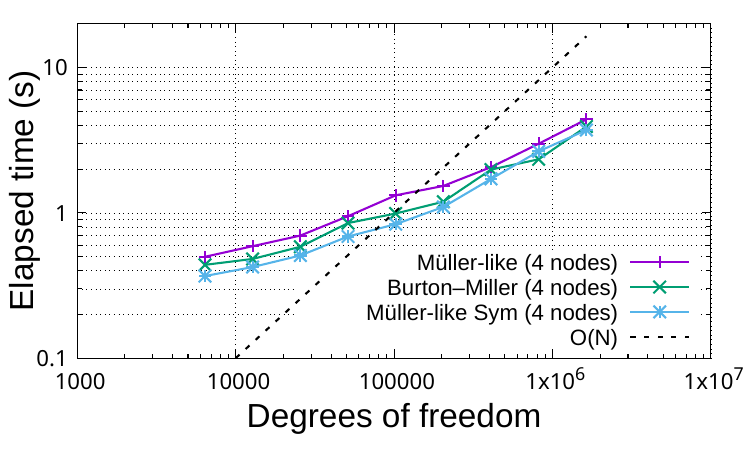}
  \hfill
  \includegraphics[width=0.48\textwidth]{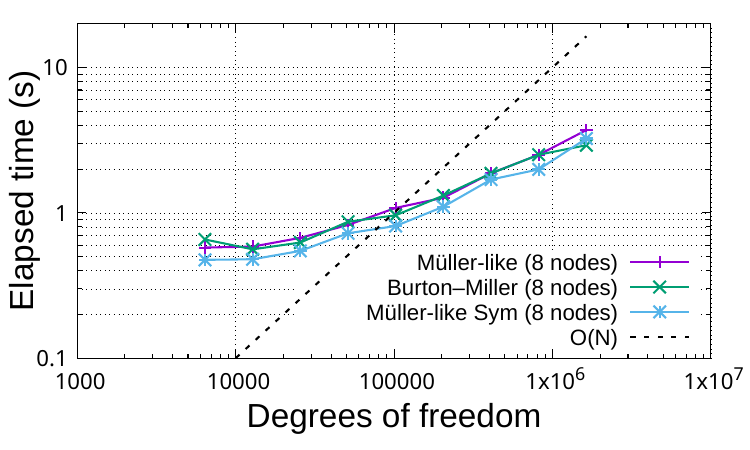}
  \caption{Computational time obtained using kite geometry in Helmholtz scattering.
    The four panels show the elapsed times of each boundary integral formulation when the number of compute nodes is varied as 1, 2, 4, and 8 for the same problem setting.
  }
  \label{fig:speed_helm}
\end{figure}
The speedup ratios of the symmetric M\"uller-like and Burton--Miller formulations relative to the standard M\"uller-like baseline are summarized in Table \ref{tab:speedup_ratio_helmholtz}; these are the same results for 1, 2, and 4 nodes presented  in Figure \ref{fig:speed_helm}.
 This table indicates that the symmetric M\"uller-like formulation achieves a speedup ratio of approximately 1.41 in the Helmholtz transmission problem, demonstrating higher efficiency than that of approximately 1.23 in the Burton--Miller formulation.
\begin{table}[htbp]
  \centering
  \caption{Speedup ratios of Burton--Miller (BM) and symmetric M\"uller-like (M-sym) formulations compared with standard M\"uller-like formulation (Base) in Helmholtz transmission wave scattering. We define the speedup ratio of ``Base'' as 1.0.}
  \label{tab:speedup_ratio_helmholtz}
  \resizebox{0.8\linewidth}{!}{
  \begin{tabular}{l ccc ccc ccc}
    \toprule
    Number of nodes & \multicolumn{3}{c}{1} & \multicolumn{3}{c}{2} & \multicolumn{3}{c}{4} \\
    (CPU cores) & \multicolumn{3}{c}{(192)} & \multicolumn{3}{c}{(384)} & \multicolumn{3}{c}{(768)} \\
    \cmidrule(lr){2-4} \cmidrule(lr){5-7} \cmidrule(lr){8-10}
    Degrees of freedom & Base & BM & M-sym & Base & BM & M-sym & Base & BM & M-sym \\
    \midrule
    6400     & 1.0 & 1.264 & 1.741 & 1.0 & 1.333 & 1.513 & 1.0 & 1.136 & 1.357 \\ 
    12800    & 1.0 & 1.346 & 1.464 & 1.0 & 1.246 & 1.431 & 1.0 & 1.224 & 1.392 \\ 
    25600    & 1.0 & 1.242 & 1.461 & 1.0 & 1.230 & 1.577 & 1.0 & 1.194 & 1.371 \\ 
    51200    & 1.0 & 1.251 & 1.435 & 1.0 & 1.204 & 1.444 & 1.0 & 1.115 & 1.379 \\ 
    102400   & 1.0 & 1.135 & 1.294 & 1.0 & 1.287 & 1.393 & 1.0 & 1.334 & 1.580 \\ 
    204800   & 1.0 & 1.274 & 1.370 & 1.0 & 1.314 & 1.545 & 1.0 & 1.283 & 1.395 \\ 
    409600   & 1.0 & 1.185 & 1.365 & 1.0 & 1.241 & 1.443 & 1.0 & 1.040 & 1.205 \\ 
    819200   & 1.0 & 1.215 & 1.410 & 1.0 & 1.213 & 1.418 & 1.0 & 1.279 & 1.129 \\ 
    1638400  & 1.0 & 1.199 & 1.386 & 1.0 & 1.257 & 1.393 & 1.0 & 1.121 & 1.184 \\ 
    \midrule
    Average  & 1.0 & 1.234 & 1.436 & 1.0 & 1.258 & 1.462 & 1.0 & 1.192 & 1.332 \\
    \bottomrule
  \end{tabular}
  }
\end{table}

\end{example}

\subsection{Elastic transmission problem}
\begin{example}[Verification]
We analogously test the solver based on the symmetric formulation for the elastic transmission problem.
 For this problem, we compare the symmetric formulation against a straightforward low-rank approximation of PMCHWT equations \eqref{eq:elastic_pmchwt} that does not exploit symmetry. This baseline corresponds to the case where the double-sided interpolative decomposition is naively performed. The legends ``PMCHWT Elastic Sym'' and ``PMCHWT Elastic'' in the figures indicate the solvers based on the efficient and the naive low-rank approximations, respectively.

\begin{figure}[tb]
  \centering
  \includegraphics[width = 0.48\linewidth]{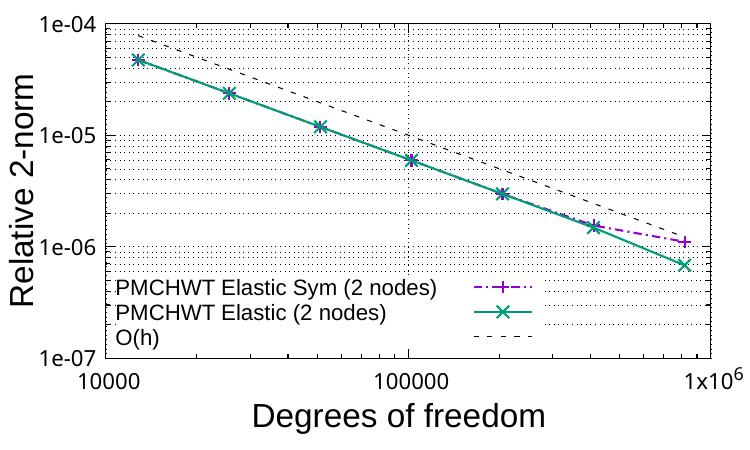}
  \hfill
  \includegraphics[width = 0.48\linewidth]{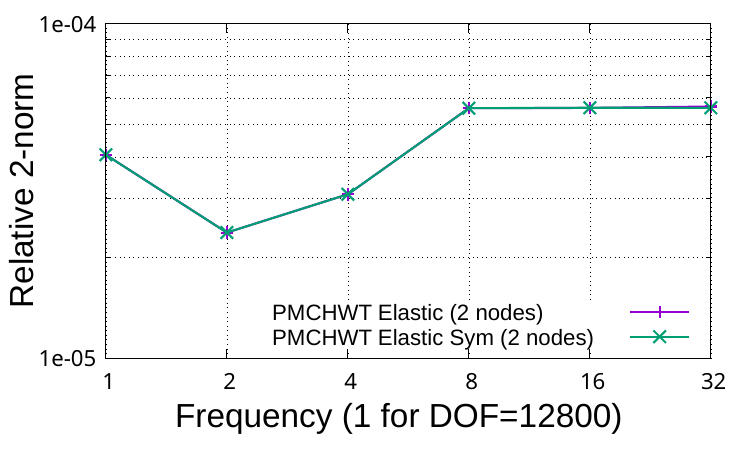}
  \label{fig:elastic_error}
  \caption{Relative 2-norm errors of direct solver for  unit circle in each formulation at initial rank $30$ and rank scaling parameter $a = 1.15$.
    Left: errors obtained by varying the problem size while maintaining the frequency at $\omega = 2$. Right: errors obtained by maintaining a constant ratio between frequency and degrees of freedom (DOF).}
\end{figure}
We first use the initial rank $30$ and the rank scaling parameter $a = 1.15$, which is the same to the Helmholtz counterpart.
The relative 2-norm errors of the direct solver relative to the analytical solution are shown in Figure \ref{fig:elastic_error}, which includes the results at a fixed frequency and those where the frequency scales with the degrees of freedom.
As shown, implementation of the low-rank approximation based on the proxy method and its application to the fast direct solver have been mostly verified.
However, at 819200 degrees of freedom in the left of Figure \ref{fig:elastic_error}, 
the convergence rate deviates from $O(h)$, where $h$ is the representative mesh size.

\begin{figure}[tb]
  \centering
  \includegraphics[width=0.48\textwidth]{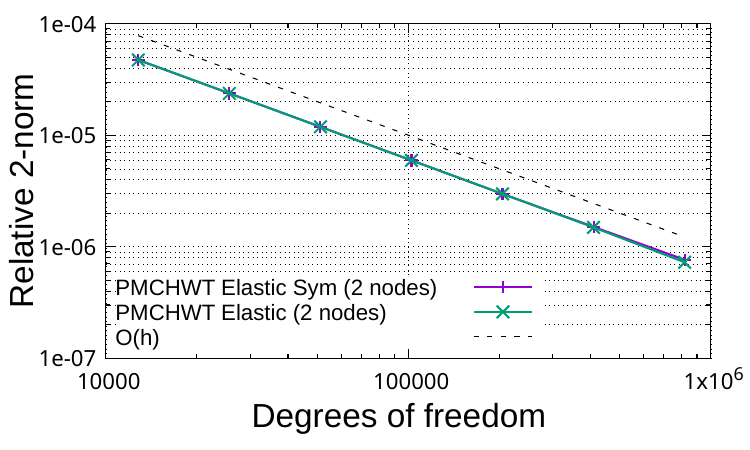}
  \hfill
  \includegraphics[width=0.48\textwidth]{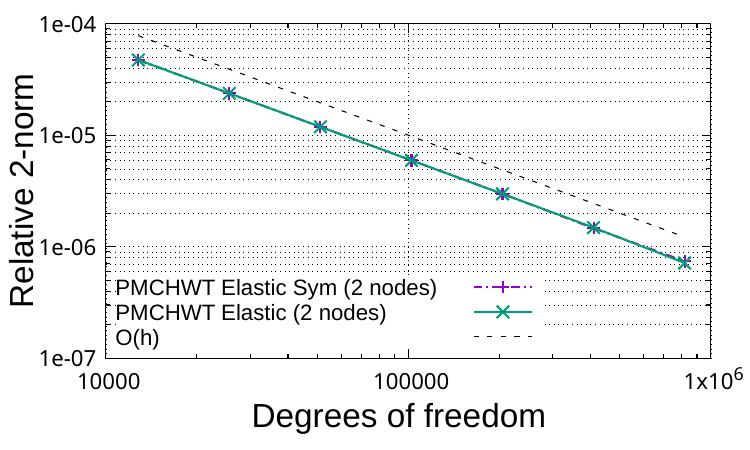}
  \includegraphics[width=0.48\textwidth]{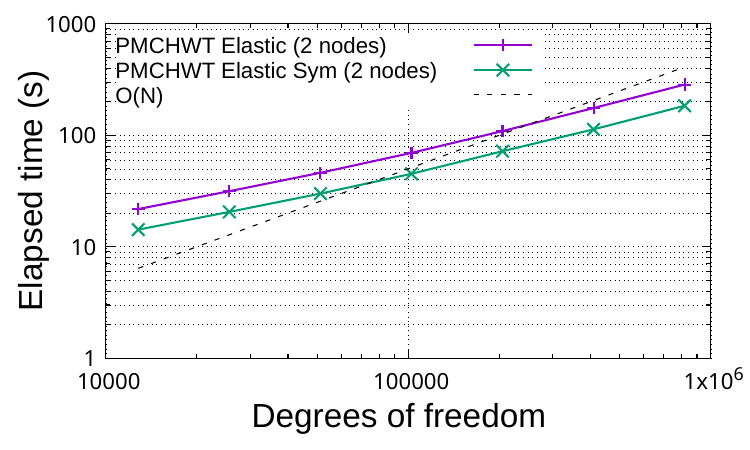}
  \hfill
  \includegraphics[width=0.48\textwidth]{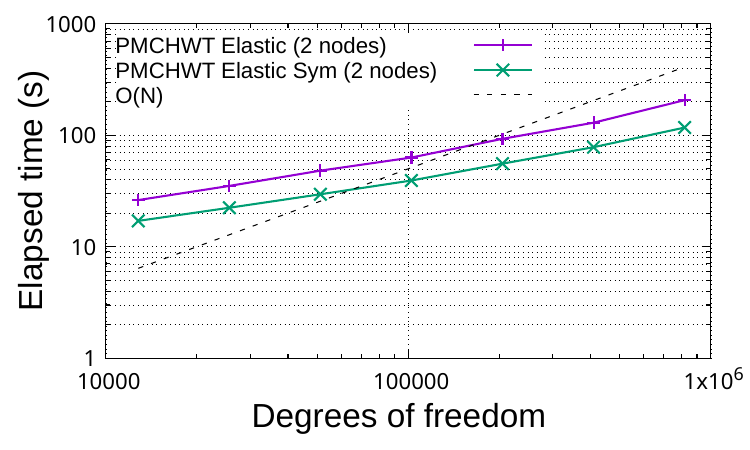}
  \caption{
    Relative 2-norm errors (top) and computational time (bottom) for unit circle in elastic wave scattering.
    Left column: initial rank $30$, scaling parameter $a = 1.26$.
    Right column: initial rank $40$, scaling parameter $a = 1.15$.
  }
  \label{fig:elastic_verify_other_prams}
\end{figure}
We then vary the parameters of the low-rank approximation to enhance the accuracy.
 Figure \ref{fig:elastic_verify_other_prams} indicates that increasing the initial rank is preferable. Although a larger scaling parameter also improves accuracy, it significantly increases computational time.

\end{example}

\begin{example}[Speedup]
Using $\omega = 2$, initial rank $40$ and the rank scaling parameter $a = 1.15$,
we measure the performance of the fast direct solver exploiting symmetry in elastic transmission problems.
As the inclusion, the square geometry in Figure \ref{fig:geometries} is used.
Similar to the Helmholtz scattering case, the Burton--Miller formulation for the elastic transmission problem is used, which is given by
\begin{equation}
  \mqty[
    \bm{T}^{(0)} - \frac{M_{11}}{2} + \beta \bm{H}^{(0)}
  &
    -\bm{U}^{(0)}  + \beta \qty(\bm{T}^{* (0)} - \frac{M_{10}}{2})
  \\
  \bm{T}^{(1)} + \frac{M_{01}}{2}
  &
  -\bm{U}^{(1)}
  ]
  \mqty[
  \bm{v} \\
  \bm{t}
  ]
  =
  \mqty[
  -\bm{v}^{\mathrm{in}} -\beta \bm{t}^{\mathrm{in}} \\
  0
  ], \label{eq:elastic_bm}
\end{equation}
where $M_{11}$, $M_{10}$, and $M_{01}$ denote the Gram matrices associated with the P1 test and P1 basis functions, the P1 test and P0 basis functions, and the P0 test and P1 basis functions, respectively.
Here, $\beta = i / ( \omega \sqrt{\mu^{(0)} \rho^{(0)}})$ is a Burton--Miller constant.

\begin{figure}[tb]
  \centering
  \includegraphics[width=0.48\textwidth]{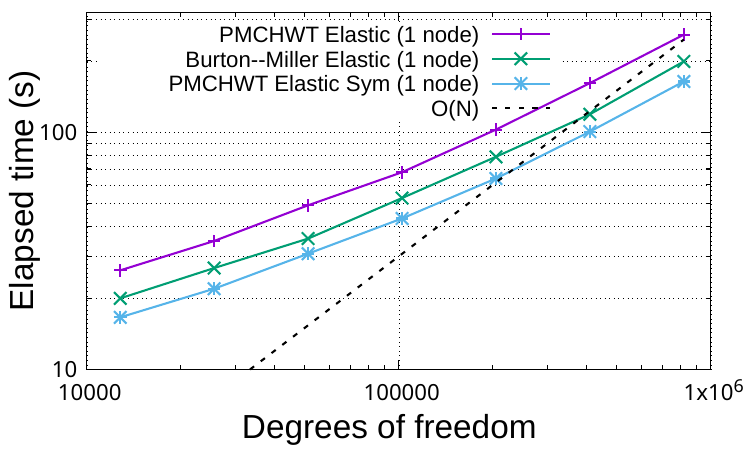}
  \hfill
  \includegraphics[width=0.48\textwidth]{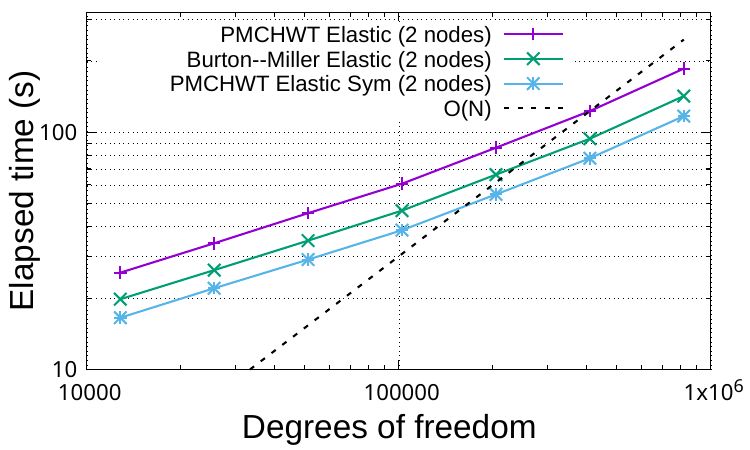}
  \\
  \includegraphics[width=0.48\textwidth]{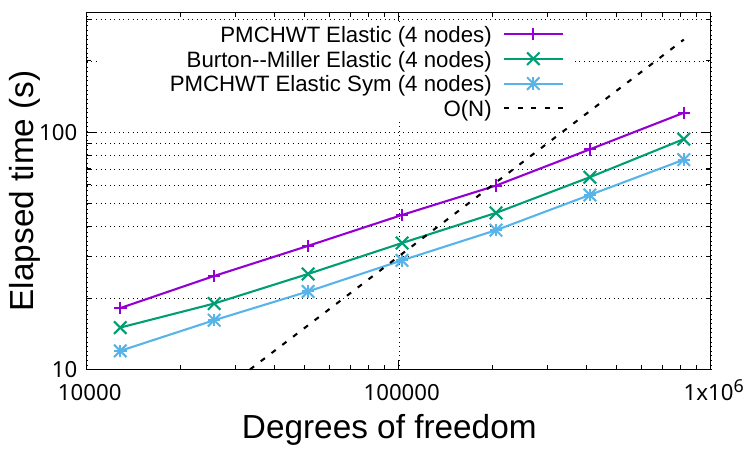}
  \hfill
  \includegraphics[width=0.48\textwidth]{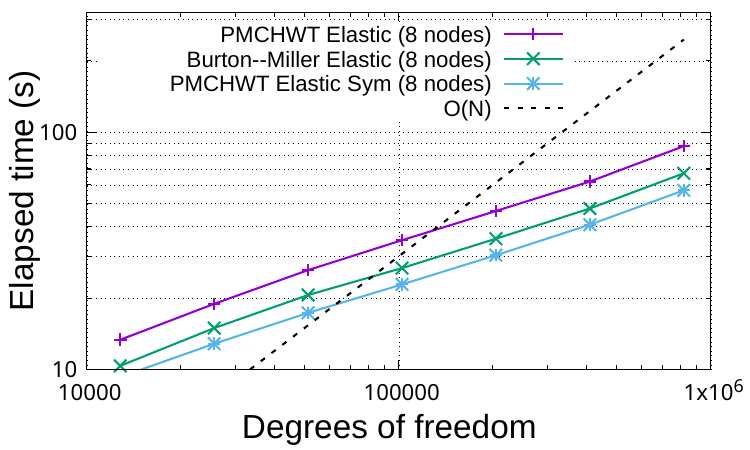}
\caption{
  Computational time obtained using square geometry in elastic wave scattering.
  The four plots show the elapsed times of
  the direct solvers based on the PMCHWT \eqref{eq:elastic_pmchwt} and Burton--Miller \eqref{eq:elastic_bm} formulations when varying the number of compute nodes as 1, 2, 4, and 8 under the same problem setting.
}
  \label{fig:time_elastic}
\end{figure}
Figure \ref{fig:time_elastic} shows the results with respect to computational time.
 Due to the complex nature of the fundamental solution, the solver for elastic wave scattering requires a higher computational workload than its Helmholtz counterpart.
 This ensures that the PMCHWT formulation exploiting symmetry remains the fastest, even when using eight compute nodes.
 Table \ref{tab:speedup_ratio_elastic} presents the speedup ratios corresponding to Figure \ref{fig:time_elastic}.
 As indicated in this table, the fast direct solver based on the PMCHWT formulation exploiting symmetry is over 1.5 times faster than the one not using symmetry, and the Burton--Miller formulation achieves approximately a 1.3-fold speedup.

\begin{table}[htbp]
  \centering
  \caption{Speedup ratios of Burton--Miller (BM) and symmetric PMCHWT (P-sym) formulations relative to standard PMCHWT formulation (Base) in elastic wave transmission scattering. We define the speedup ratio of ``Base'' as 1.0.}
  \label{tab:speedup_ratio_elastic}
  \resizebox{\linewidth}{!}{
  \begin{tabular}{l ccc ccc ccc ccc}
    \toprule
    Number of nodes & \multicolumn{3}{c}{1} & \multicolumn{3}{c}{2} & \multicolumn{3}{c}{4} & \multicolumn{3}{c}{8} \\
    (CPU cores) & \multicolumn{3}{c}{(192)} & \multicolumn{3}{c}{(384)} & \multicolumn{3}{c}{(768)} & \multicolumn{3}{c}{(1536)} \\
    \cmidrule(lr){2-4} \cmidrule(lr){5-7} \cmidrule(lr){8-10} \cmidrule(lr){11-13} 
    Degrees of freedom & Base & BM & P-sym & Base & BM & P-sym & Base & BM & P-sym & Base & BM & P-sym \\
    \midrule
    12800  & 1.0 & 1.312 & 1.577 & 1.0 & 1.294 & 1.549 & 1.0 & 1.209 & 1.515 & 1.0 & 1.288 & 1.435 \\
    25600  & 1.0 & 1.301 & 1.590 & 1.0 & 1.298 & 1.548 & 1.0 & 1.307 & 1.536 & 1.0 & 1.265 & 1.473 \\
    51200  & 1.0 & 1.382 & 1.600 & 1.0 & 1.306 & 1.570 & 1.0 & 1.314 & 1.557 & 1.0 & 1.273 & 1.514 \\
    102400 & 1.0 & 1.284 & 1.566 & 1.0 & 1.298 & 1.570 & 1.0 & 1.313 & 1.553 & 1.0 & 1.311 & 1.536 \\
    204800 & 1.0 & 1.302 & 1.608 & 1.0 & 1.298 & 1.572 & 1.0 & 1.303 & 1.541 & 1.0 & 1.308 & 1.536 \\
    409600 & 1.0 & 1.353 & 1.601 & 1.0 & 1.308 & 1.581 & 1.0 & 1.310 & 1.556 & 1.0 & 1.299 & 1.520 \\
    819200 & 1.0 & 1.297 & 1.575 & 1.0 & 1.306 & 1.586 & 1.0 & 1.291 & 1.577 & 1.0 & 1.301 & 1.535 \\
    \midrule
    Average & 1.0 & 1.319 & 1.588 & 1.0 & 1.301 & 1.568 & 1.0 & 1.292 & 1.548 & 1.0 & 1.292 & 1.507 \\
    \bottomrule
  \end{tabular}
  }
\end{table}

\end{example}

\section{Conclusions} \label{sec:conclusions}
In this paper, we presented an efficient framework for accelerating hierarchical low-rank compression in fast direct solvers by fully exploiting matrix symmetry in wave transmission scattering problems.
For Helmholtz scattering, we proposed a symmetric Müller-like boundary integral equation that involves at most weakly singular kernels and maintains a minimal number of unknowns while avoiding fictitious eigenvalues.
Combined with a one-point zeta-corrected Nyström discretization, the formulation successfully achieves third-order convergence while preserving complex symmetry.
Within the context of the HSS representation-based direct solver, this symmetry allows us to halve the computational cost of the interpolative decomposition in the proxy method by setting the left coefficient matrix for low-rank approximation as the transpose of the right one.
Furthermore, this paper has provided guidance on how to select scaling parameters in multilevel solvers in Theorem \ref{thm:order},
which have traditionally been determined empirically.
We have also demonstrated the extensibility of this framework to two-dimensional elastic wave transmission problems via the symmetric Galerkin boundary element method applied to the PMCHWT formulation.
Comprehensive numerical experiments on a supercomputer validated the correctness and efficiency of our implementation,
showing significant speedups, approximately 1.41-fold for Helmholtz scattering and greater than 1.5-fold for elastic scattering, without loss of accuracy.

 Future avenues of research include expanding this approach to three-dimensional problems and extending the symmetry-exploiting techniques to more advanced hierarchical solvers, such as $\mathcal{H}^2$-matrix-based direct solvers and hierarchical interpolative factorizations incorporating an additional re-compression phase. 
Furthermore, although the present study only treated the single-scatterer case, the proposed framework can be extended to cases involving multiple scatterers with straightforward modifications following \cite{matsumoto2026accelerated}.

\bibliographystyle{siamplain}
\bibliography{siamref_short}
\end{document}

%% file: shared_arxiv.tex
\usepackage{lipsum}
\usepackage{amsfonts}
\usepackage{graphicx}
\usepackage{epstopdf}
\usepackage{algorithmic}
\ifpdf
  \DeclareGraphicsExtensions{.eps,.pdf,.png,.jpg}
\else
  \DeclareGraphicsExtensions{.eps}
\fi

\newsiamremark{remark}{Remark}
\newsiamremark{hypothesis}{Hypothesis}
\crefname{hypothesis}{Hypothesis}{Hypotheses}
\newsiamthm{claim}{Claim}
\newsiamremark{fact}{Fact}
\crefname{fact}{Fact}{Facts}

\headers{Hierarchical compression exploiting symmetry}{Y. Matsumoto, T. Maruyama, Q. Ma and R. Yokota}

\title{Hierarchical low-rank compression exploiting symmetric boundary integral equations in transmission scattering\thanks{Submitted to the editors DATE.
\funding{This work was supported by Japan Society for the Promotion of Science under KAKENHI grant number 24K20783
and by the computational resources of TSUBAME4.0 at the Institute of Science Tokyo %and Camphor3 from Kyoto University,
provided through the projects ``Joint Usage/Research Center for Interdisciplinary Large-scale Information Infrastructures (JHPCN)'' and ``High Performance Computing Infrastructure (HPCI)'' in Japan (project IDs jh260048 and jh260068, respectively).}}}

\author{
  Yasuhiro Matsumoto\thanks{Center for Information Infrastructure, Institute of Science Tokyo, Tokyo, Japan
    (\email{matsumoto@cii.isct.ac.jp}).}
  \and Taizo Maruyama\thanks{Department of Civil and Environmental Engineering, Institute of Science Tokyo, Tokyo, Japan
    (\email{maruyama.t.45ef@m.isct.ac.jp}).}
  \and Qianxiang Ma\thanks{RIKEN Center for Computational Science, Kobe, Japan
    (\email{qianxiang.ma@riken.jp}).}
\and Rio Yokota\thanks{Institute of Integrated Research, Institute of Science Tokyo, Tokyo, Japan
    (\email{rioyokota@rio.scrc.iir.isct.ac.jp}).}
}

\usepackage{amsopn}
\DeclareMathOperator{\diag}{diag}

%% file: siamref_short.bib
@article{MATSUMOTO2025106148,
title = {Linearly scalable fast direct solver based on proxy surface method for two-dimensional elastic wave scattering by cavity},
journal = {Eng. Anal. Bound. Elem.},
volume = 173,
pages = 106148,
year = 2025,
issn = {0955-7997},
doi = {10.1016/j.enganabound.2025.106148},
author = {Yasuhiro Matsumoto and Taizo Maruyama},
Xurl = {https://www.sciencedirect.com/science/article/pii/S0955799725000360},
}

@article{matsumoto2026fast,
  title={A fast direct solver for two-dimensional transmission problems of elastic waves},
  author={Matsumoto, Yasuhiro and Maruyama, Taizo},
  journal={Eng. Comput.},
  volume={42},
  number={68},
  pages={},
  year={2026},
doi={10.1007/s00366-026-02302-8},
  publisher={Springer}
}

@article{matsumoto2026proxy,
  title={Proxy-surface-based fast direct solver for {TE}-mode scattering problems on distributed memory systems},
  author={Matsumoto, Yasuhiro and Yokota, Rio},
  journal={arXiv preprint arXiv:2607.00790},
doi={10.48550/arXiv.2607.00790},
  year={2026}
}

@article{matsumoto2025efficient,
  title={Efficient {LU} factorization exploiting direct-indirect {B}urton--{M}iller equation for {H}elmholtz transmission problems},
  author={Matsumoto, Yasuhiro and Matsushima, Kei},
  journal={arXiv preprint arXiv:2512.14193},
doi={10.48550/arXiv.2512.14193},
  year={2025}
}

@book{colton2013inverse,
  title={Inverse acoustic and electromagnetic scattering theory (Third Edition)},
  author={Colton, David L and Kress, Rainer},
  year={2013},
  address={New York},
  doi={10.1007/978-1-4614-4942-3},
  publisher={Springer}
}

@book{kress2014linear,
  title={Linear Integral Equations (Third Edition)},
  author={Kress, Rainer},
  year={2014},
  address={New York},
doi={10.1007/978-1-4614-9593-2},
  publisher={Springer}
}

@article{kress1977transmission,
    author = {Kress, R. and Roach, G. F.},
    title = {Transmission problems for the {H}elmholtz equation},
    journal = {J. Math. Phys.},
    volume = {19},
    number = {6},
    pages = {1433-1437},
    year = {1978},
    month = {06},
    issn = {0022-2488},
    doi = {10.1063/1.523808},
}

@article{MARUYAMA202511-251219,
  title={On Numerical Symmetry of Coefficient Matrix in the {PMCHWT} Formulation for Elastodynamic Transmission Problem},
  author={Taizo Maruyama and Yasuhiro Matsumoto},
  journal={Trans. JASCOME},
  volume={25},
  number={ },
  pages={87-93},
  year={2025},
  doi={10.60443/jascome.25.0_87},
note={(in Japanese)}
}

@book{sutradhar2008symmetric,
  title={Symmetric Galerkin boundary element method},
  author={Sutradhar, Alok and Paulino, Glaucio H and Gray, Leonard J},
  year={2008},
  publisher={Springer Berlin},
  address={Heidelberg},
doi={10.1007/978-3-540-68772-6},
}

@article{hiptmair2022spurious,
  title={Spurious quasi-resonances in boundary integral equations for the {H}elmholtz transmission problem},
  author={Hiptmair, Ralf and Moiola, Andrea and Spence, Euan A},
  journal={SIAM J. Appl. Math.},
  volume={82},
  number={4},
  pages={1446--1469},
  year={2022},
doi={10.1137/21M1447052},
  publisher={SIAM}
}

@incollection{POGGIO1973159,
title = {CHAPTER 4 - {I}ntegral Equation Solutions of Three-dimensional Scattering Problems},
editor = {R. Mittra},
booktitle = {Computer Techniques for Electromagnetics},
publisher = {Pergamon},
  address		= "Oxford",
pages = {159-264},
year = {1973},
series = {International Series of Monographs in Electrical Engineering},
isbn = {978-0-08-016888-3},
doi = {10.1016/B978-0-08-016888-3.50008-8},
author = {A.J. Poggio and E.K. Miller}
}

@ARTICLE{Chang1977surface,
  author={Chang, Y. and Harrington, R.},
  journal={IEEE Trans. Antennas Propag.}, 
  title={A surface formulation for characteristic modes of material bodies}, 
  year={1977},
  volume={25},
  number={6},
  pages={789-795},
  doi={10.1109/TAP.1977.1141685}
}

@article{VANTWOUT2022111229,
title = {Frequency-robust preconditioning of boundary integral equations for acoustic transmission},
journal = {J. Comput. Phys.},
volume = {462},
pages = {111229},
year = {2022},
issn = {0021-9991},
doi = {10.1016/j.jcp.2022.111229},
author = {Elwin {van 't Wout} and Seyyed R. Haqshenas and Pierre Gélat and Timo Betcke and Nader Saffari},
}

@article{Antoine01102008,
author = {X. Antoine and Y. Boubendir},
title = {An integral preconditioner for solving the two-dimensional scattering transmission problem using integral equations},
journal = {Int. J. Comput. Math.},
volume = {85},
number = {10},
pages = {1473--1490},
year = {2008},
publisher = {Taylor \& Francis},
doi = {10.1080/00207160802033335},
}

@article{NIINO201266,
title = {Preconditioning based on {C}alderon’s formulae for periodic fast multipole methods for {H}elmholtz’ equation},
journal = {J. Comput. Phys.},
volume = {231},
number = {1},
pages = {66-81},
year = {2012},
issn = {0021-9991},
doi = {10.1016/j.jcp.2011.08.019},
author = {Kazuki Niino and Naoshi Nishimura},
}

@inproceedings{ma2022scalable,
  author={Ma, Qianxiang and Deshmukh, Sameer and Yokota, Rio},
  booktitle={SC22: International Conference for High Performance Computing, Networking, Storage and Analysis}, 
  title={Scalable Linear Time Dense Direct Solver for 3-{D} Problems without Trailing Sub-Matrix Dependencies}, 
  year={2022},
  volume={},
  number={},
  pages={1-12},
  doi={10.1109/SC41404.2022.00088}
}

@article{ma2024inherently,
  title={An inherently parallel $\mathcal{H}$2-{ULV} factorization for solving dense linear systems on {GPU}s},
  author={Ma, Qianxiang and Yokota, Rio},
  journal={Int. J. High Perform. Comput. Appl.},
  volume={38},
  number={4},
  pages={314--336},
  year={2024},
doi={10.1177/10943420241242021},
  publisher={SAGE Publications Sage UK: London, England}
}

@article{chandrasekaran2005Calcolo,
  title={A fast adaptive solver for hierarchically semiseparable representations},
  author={Chandrasekaran, Shivkumar and Gu, Ming and Lyons, William},
  journal={Calcolo},
  volume={42},
  number={3},
  pages={171--185},
  year={2005},
doi={10.1007/s10092-005-0103-3},
  publisher={Springer}
}

@article{chandrasekaran2006fast,
  title={A fast {ULV} decomposition solver for hierarchically semiseparable representations},
  author={Chandrasekaran, Shiv and Gu, Ming and Pals, Timothy},
  journal={SIAM J. Matrix Anal. Appl.},
  volume={28},
  number={3},
  pages={603--622},
  year={2006},
doi={10.1137/S0895479803436652},
  publisher={SIAM}
}

@article{wu2021corrected,
  title={Corrected trapezoidal rules for boundary integral equations in three dimensions},
  author={Wu, Bowei and Martinsson, Per-Gunnar},
  journal={Numer. Math.},
  volume={149},
  number={4},
  pages={1025--1071},
  year={2021},
doi={10.1007/s00211-021-01244-1},
  publisher={Springer}
}

@article{wu2021zeta,
  title={Zeta correction: a new approach to constructing corrected trapezoidal quadrature rules for singular integral operators},
  author={Wu, Bowei and Martinsson, Per-Gunnar},
  journal={Adv. Comput. Math.},
  volume={47},
  number={3},
  pages={45},
  year={2021},
  doi={10.1007/s10444-021-09872-9},
  publisher={Springer}
}

@article{wu2023unified,
  title={A unified trapezoidal quadrature method for singular and hypersingular boundary integral operators on curved surfaces},
  author={Wu, Bowei and Martinsson, Per-Gunnar},
  journal={SIAM J. Numer. Anal.},
  volume={61},
  number={5},
  pages={2182--2208},
  year={2023},
  doi={10.1137/22M1520372},
  publisher={SIAM}
}

@book{martinsson2019book,
author = {Martinsson, Per-Gunnar},
title = {Fast Direct Solvers for Elliptic PDEs},
publisher = {Society for Industrial and Applied Mathematics},
year = {2019},
doi = {10.1137/1.9781611976045},
address = {Philadelphia, PA},
edition   = {},
}

@article{MARTINSSON20051,
title = {A fast direct solver for boundary integral equations in two dimensions},
journal = {J. Comput. Phys.},
volume = {205},
number = {1},
pages = {1-23},
year = {2005},
issn = {0021-9991},
doi = {10.1016/j.jcp.2004.10.033},
Xurl = {https://www.sciencedirect.com/science/article/pii/S0021999104004462},
author = {Per-Gunnar Martinsson and V. Rokhlin},
}

@article{martinsson2007fast,
  title={A fast direct solver for scattering problems involving elongated structures},
  author={Martinsson, Per-Gunnar and Rokhlin, Vladimir},
  journal={J. Comput. Phys.},
  volume={221},
  number={1},
  pages={288--302},
  year={2007},
doi={10.1016/j.jcp.2006.06.037},
  publisher={Elsevier}
}

@article{minden2017recursive,
  title={A recursive skeletonization factorization based on strong admissibility},
  author={Minden, Victor and Ho, Kenneth L and Damle, Anil and Ying, Lexing},
  journal={Multiscale Model. Simul.},
  volume={15},
  number={2},
  pages={768--796},
doi={10.1137/16M1095949},
  year={2017},
  publisher={SIAM}
}

@article{Greengard2009fast,
 title={Fast direct solvers for integral equations in complex three-dimensional domains},
 volume={18},
 DOI={10.1017/S0962492906410011},
 journal={Acta Numer.},
 author={Greengard, Leslie and Gueyffier, Denis and Martinsson, Per-Gunnar and Rokhlin, Vladimir},
 year={2009},
 pages={243--275}
}

@article{kleinman1988single,
  title={On single integral equations for the transmission problem of acoustics},
  author={Kleinman, RE and Martin, PA},
  journal={SIAM J. Appl. Math.},
  volume={48},
  number={2},
  pages={307--325},
  year={1988},
  publisher={SIAM},
doi={10.1137/0148016},
}

@book{eringen1975elastodynamics,
  title={Elastodynamics volume 2, {L}inear {T}heory},
  author={Eringen, A.C. and Suhubi, E.S.},
  year={1975},
address={USA},
doi={10.1016/C2013-0-10628-9},
  publisher={Academic press}
}

@article{CHEN1998529,
title = {On fictitious frequencies using dual series representation},
journal = {Mech. Res. Commun.},
volume = {25},
number = {5},
pages = {529-534},
year = {1998},
issn = {0093-6413},
doi = {10.1016/S0093-6413(98)00069-X},
author = {J.T. Chen}
}

@article{misawa2012jascome,
  title={Boundary integral formulations for one-periodic transmission problems for {H}elmholtz' equation in 2-{D}},
  author={Misawa, Ryota and Nishimura, Naoshi},
  journal={Trans. JASCOME},
  volume={12},
  number={ },
  pages={109-114},
  year={2012},
note={(in Japanese)},
  doi={10.60443/jascome.23.0_71}
}

@article{cheng2005compression,
  title={On the compression of low rank matrices},
  author={Cheng, Hongwei and Gimbutas, Zydrunas and Martinsson, Per-Gunnar and Rokhlin, Vladimir},
  journal={SIAM J. Sci. Comput.},
  volume={26},
  number={4},
  pages={1389--1404},
  year={2005},
doi={10.1137/030602678},
  publisher={SIAM}
}

@article{dominguez2024nystrom,
  title={Nystr{\"o}m discretizations of boundary integral equations for the solution of 2{D} elastic scattering problems},
  author={Dom{\'\i}nguez, V{\'\i}ctor and Turc, Catalin},
  journal={J. Comput. Appl. Math.},
  volume={440},
  pages={115622},
  year={2024},
doi={10.1016/j.cam.2023.115622},
  publisher={Elsevier}
}

@article{tong2007nystrom,
  title={Nystr{\"o}m method for elastic wave scattering by three-dimensional obstacles},
  author={Tong, Mei Song and Chew, Weng Cho},
  journal={J. Comput. Phys.},
  volume={226},
  number={2},
  pages={1845--1858},
  year={2007},
doi={10.1016/j.jcp.2007.06.013},
  publisher={Elsevier}
}

@article{bonnet1998symmetric,
    author = {Bonnet, Marc and Maier, Giulio and Polizzotto, Castrenze},
    title = {Symmetric {G}alerkin Boundary Element Methods},
    journal = {Appl. Mech. Rev.},
    volume = {51},
    number = {11},
    pages = {669-704},
    year = {1998},
    month = {11},
    issn = {0003-6900},
    doi = {10.1115/1.3098983},
}

@article{rokhlin1990rapid,
  title={Rapid solution of integral equations of scattering theory in two dimensions},
  author={Rokhlin, Vladimir},
  journal={J. Comput. Phys.},
  volume={86},
  number={2},
  pages={414--439},
  year={1990},
doi={10.1016/0021-9991(90)90107-C},
  publisher={Elsevier}
}

@article{LeLouer2026muller,
  author    = {Le Lou{\"e}r, Fr{\'e}dr{\'e}rique},
  title     = {On the {M}{\"u}ller boundary integral equation method for solving contact problems in {2D} linear elasticity},
  journal   = {J. Eng. Math.},
  year      = {2026},
  volume    = {159},
  number    = {1},
  pages     = {},
  doi       = {10.1007/s10665-026-10538-y},
  Xurl       = {https://doi.org/10.1007/s10665-026-10538-y}
}

@article{nick2024numerical,
  title={Numerical analysis for electromagnetic scattering with nonlinear boundary conditions},
  author={Nick, J{\"o}rg},
  journal={Math. Comp.},
  volume={93},
  number={348},
  pages={1529--1568},
doi={10.1090/mcom/3914},
  year={2024}
}

@article{ganesh2024fast,
    author = {Ganesh, M. and Hawkins, Stuart C.},
    title = {A fast algorithm for the two-dimensional {H}elmholtz transmission problem with large multiple scattering configurations},
    journal = {J. Acoust. Soc. Am.},
    volume = {156},
    number = {2},
    pages = {752-762},
    year = {2024},
    month = {08},
    Xissn = {0001-4966},
    doi = {10.1121/10.0028121},
    Xurl = {https://doi.org/10.1121/10.0028121},
    Xeprint = {https://pubs.aip.org/asa/jasa/article-pdf/156/2/752/20095977/752_1_10.0028121.pdf},
}

@article{dominguez2022boundary,
  title={Boundary integral equation methods for the solution of scattering and transmission 2{D} elastodynamic problems},
  author={Dom{\'\i}nguez, V{\'\i}ctor and Turc, Catalin},
  journal={IMA J. Appl. Math.},
  volume={87},
  number={4},
  pages={647--706},
  year={2022},
doi={10.1093/imamat/hxac018},
  publisher={Oxford University Press}
}

@Inbook{Ying2009,
author="Ying, Lexing",
editor="Engquist, Bj{\"o}rn
and L{\"o}tstedt, Per
and Runborg, Olof",
title="Fast Algorithms for Boundary Integral Equations",
bookTitle="Multiscale Modeling and Simulation in Science",
year="2009",
publisher="Springer Berlin Heidelberg",
address="Berlin, Heidelberg",
pages="139--193",
isbn="978-3-540-88857-4",
doi="10.1007/978-3-540-88857-4_3",
Xurl="https://doi.org/10.1007/978-3-540-88857-4_3"
}

@article{2019Abduljabbar,
author = {Abduljabbar, Mustafa and Farhan, Mohammed Al and Al-Harthi, Noha and Chen, Rui and Yokota, Rio and Bagci, Hakan and Keyes, David},
title = {Extreme Scale {FMM}-Accelerated Boundary Integral Equation Solver for Wave Scattering},
journal = {SIAM J. Sci. Comput.},
volume = {41},
number = {3},
pages = {C245-C268},
year = {2019},
doi = {10.1137/18M1173599},
}

@article{liu2024massive,
  title={Massive parallelization of multilevel fast multipole algorithm for 3-{D} electromagnetic scattering problems on {SW}26010 many-core cluster},
  author={Liu, Xin-Duo and He, Wei-Jia and Yang, Ming-Lin and Sheng, Xin-Qing},
  journal={J. Supercomput.},
  volume={80},
  number={7},
  pages={8702--8718},
  year={2024},
  publisher={Springer},
doi={https://doi.org/10.1007/s11227-023-05759-2}
}

@book{muller1969foundations,
  title     = {Foundations of the Mathematical Theory of Electromagnetic Waves},
  author    = {M{\"u}ller, Claus},
  year      = {1969},
  publisher = {Springer},
  address   = {Berlin, Heidelberg},
  series    = {Grundlehren der mathematischen Wissenschaften},
  volume    = {155},
  doi       = {10.1007/978-3-662-11773-6},
  isbn      = {978-3-662-11773-6}
}

@article{boubendir2015integral,
  title={Integral equations requiring small numbers of {K}rylov-subspace iterations for two-dimensional smooth penetrable scattering problems},
  author={Boubendir, Yassine and Bruno, Oscar and Levadoux, David and Turc, Catalin},
  journal={Appl. Numer. Math.},
  volume={95},
  pages={82--98},
  year={2015},
doi={10.1016/j.apnum.2015.01.005},
  publisher={Elsevier}
}

@article{hsiao2011system,
  title={A system of boundary integral equations for the transmission problem in acoustics},
  author={Hsiao, George C and Xu, Liwei},
  journal={Appl. Numer. Math.},
  volume={61},
  number={9},
  pages={1017--1029},
  year={2011},
doi={10.1016/j.apnum.2011.05.003},
  publisher={Elsevier}
}

@article{laliena2009symmetric,
  title={Symmetric boundary integral formulations for {H}elmholtz transmission problems},
  author={Laliena, Antonio R and Rap{\'u}n, M-L and Sayas, F-J},
  journal={Appl. Numer. Math.},
  volume={59},
  number={11},
  pages={2814--2823},
  year={2009},
doi={10.1016/j.apnum.2008.12.030},
  publisher={Elsevier}
}

@article{rouet2016distributed,
  title={A distributed-memory package for dense hierarchically semi-separable matrix computations using randomization},
  author={Rouet, Fran{\c{c}}ois-Henry and Li, Xiaoye S and Ghysels, Pieter and Napov, Artem},
  journal={ACM Trans. Math. Software},
  volume={42},
  number={4},
  pages={1--35},
  year={2016},
doi={https://doi.org/10.1145/2930660},
  publisher={ACM New York, NY, USA}
}

@article{bebendorf2000approximation,
  title={Approximation of boundary element matrices},
  author={Bebendorf, Mario},
  journal={Numer. Math.},
  volume={86},
  number={4},
  pages={565--589},
  year={2000},
doi={10.1007/PL00005410},
  publisher={Springer}
}

@article{boukaram2026linear,
  title={Linear complexity {$\mathcal{H}^2$} direct solver for fine-grained parallel architectures},
  author={Boukaram, Wajih and Keyes, David and Li, Xiaoye and Liu, Yang and Turkiyyah, George},
  journal={IMA J. Numer. Anal.},
  volume={drag030},
  year={2026},
doi={10.1093/imanum/drag030},
  publisher={Oxford University Press},
note={(in press)}
}

@article{corona2015n,
  title={An {$O(N)$} direct solver for integral equations on the plane},
  author={Corona, Eduardo and Martinsson, Per-Gunnar and Zorin, Denis},
  journal={Appl. Comput. Harmon. Anal.},
  volume={38},
  number={2},
  pages={284--317},
  year={2015},
doi={10.1016/j.acha.2014.04.002},
  publisher={Elsevier}
}

@article{k_l_ho2016hierarchical,
  title={Hierarchical Interpolative Factorization for Elliptic Operators: Integral Equations},
  author={L. Ho, Kenneth and Ying, Lexing},
  journal={Commun. Pure Appl. Math.},
  volume={69},
  number={7},
  pages={1314--1353},
  year={2016},
doi={10.1002/cpa.21577},
  publisher={Wiley Online Library}
}

@article{matsumoto2026accelerated,
  title={An accelerated direct solver for scalar wave scattering by multiple transmissive inclusions in two dimensions},
  author={Matsumoto, Yasuhiro},
  journal={Mech. Eng. J.},
volume={13},
number={4},
  Xpages={26-00096},
  year={2026},
doi={10.1299/mej.26-00096},
  publisher={The Japan Society of Mechanical Engineers}
}

@article{kandappan2023hodlr2d,
  title={{HODLR2D}: A new class of hierarchical matrices},
  author={Kandappan, VA and Gujjula, Vaishnavi and Ambikasaran, Sivaram},
  journal={SIAM J. Sci. Comput.},
  volume={45},
  number={5},
  pages={A2382--A2408},
  year={2023},
doi={https://doi.org/10.1137/22M1491253},
  publisher={SIAM}
}

@article{costabel1988boundary,
  title={Boundary integral operators on {L}ipschitz domains: elementary results},
  author={Costabel, Martin},
  journal={SIAM J. Math. Anal.},
  volume={19},
  number={3},
  pages={613--626},
  year={1988},
doi={10.1137/0519043},
  publisher={SIAM}
}
